\documentclass[12pt, a4paper]{amsart} 
\usepackage[T2A]{fontenc}
\usepackage[utf8]{inputenc}
\usepackage[dvips]{graphicx}
\usepackage{geometry}\geometry{margin=1.25 in} 
\usepackage{amsmath, amssymb, amsfonts, amsthm}
\usepackage{enumitem}
\usepackage{graphicx}
\usepackage{tikz}
\usepackage{bbm}

\usepackage{comment}
\usepackage[utf8]{inputenc}
\usepackage[T2A]{fontenc}
\usepackage[english]{babel}
\usepackage[foot]{amsaddr}

\newtheorem{theorem}{Theorem}
\newtheorem{theom}{Theorem}

\newtheorem{lemma}{Lemma}
\newtheorem{prop}{Proposition}
\newtheorem{defin}{Definition}
\theoremstyle{remark}
\newtheorem*{rem}{Remark}

\newenvironment{proof}{\noindent{\bf Proof:}}{$\hfill \Box$ \vspace{10pt}}  
\def\Xint#1{\mathchoice
{\XXint\displaystyle\textstyle{#1}}%
{\XXint\textstyle\scriptstyle{#1}}%
{\XXint\scriptstyle\scriptscriptstyle{#1}}%
{\XXint\scriptscriptstyle\scriptscriptstyle{#1}}%
\!\int}
\def\XXint#1#2#3{{\setbox0=\hbox{$#1{#2#3}{\int}$ }
\vcenter{\hbox{$#2#3$ }}\kern-.6\wd0}}

\def\dashint{\Xint-}
\newcommand{\hel} {
\hskip2.5pt{\vrule height7pt width.5pt depth0pt}
\hskip-.2pt\vbox{\hrule height.5pt width7pt depth0pt}
\, }
\newcommand{\restr}{\hel}

\begin{document}

\title[The Beurling and Malliavin Theorem
 in several dimensions]{The  Beurling and Malliavin Theorem in several dimensions}
\author[Ioann Vasilyev]{Ioann Vasilyev}
\address{Laboratoire Analyse, G\'eom\'etrie et Mod\'elisation, CY Cergy Paris Universit\'e, 2 avenue Adolphe
Chauvin, 95300, Pontoise, France
\newline
\textit{On leave from: St.-Petersburg Department of V.A. Steklov Mathematical Institute, Russian Academy of Sciences (PDMI RAS), Fontanka 27, St.-Petersburg, 191023, Russia}
}
\email{ioann.vasilyev@cyu.fr, ioann.vasilyev239@gmail.com}
\subjclass[2020]{42B10, 42A38, 26A16, 33C10}
\keywords{Fourier Transform, Uncertainty Principle, Beurling and Malliavin Theorems, Bessel Functions.}

\begin{abstract}
The present paper is devoted to a new multidimensional generalization of the First Beurling and Malliavin  Theorem, which is a classical result in the Uncertainty Principle in Fourier Analysis.  In more detail, we establish a new sufficient condition for a radial function to be a Beurling and Malliavin majorant in several dimensions (this means that the function in question can be minorized by the modulus of a square integrable function which is not zero identically and which has the support of the Fourier transform included in an arbitrary small ball). As a corollary of the radial case, we also obtain a new sharp sufficient condition in the nonradial case. The latter result provides a partial answer to the question posed by L. H\"ormander in the paper~\cite{hor}. Our proof is different in the cases of odd and even dimensions. In the even dimensional case we make use of  one classical formula from the theory of Bessel functions due to N. Ya. Sonin.
\end{abstract}

\maketitle

\section{Acknowledgments}
The author is kindly grateful to Alex Cohen and Semyon Dyatlov for valuable comments on a previous version of this paper and to  Sergei Kislyakov, Alexei Kulikov  and Mikhail Vasilyev 
 for helpful discussions and suggestions that helped to improve the manuscript. 

\section{Introduction}
The following classical theorem is called the First Beurling and Malliavin Theorem, see~\cite{beumal}.

\begin{theom}(A. Beurling and P. Malliavin)
\label{thmBM}
	Let $\omega: \mathbb R\rightarrow (0,1]$ be a continuous function such that $\log(1/\omega) \in L^1 (\mathbb R, (1+x^2)^{-1}dx),$ with $\log(1/\omega)$ satisfying a Lipschitz condition. Then, for each $\delta > 0$ there exists a function $f \in L^2 (\mathbb R),$ not identically zero, such that $\mathrm{supp}(\widehat f) \subset [0,\delta]$ and $|f(x)| \leqslant \omega (x)$ for all $x \in \mathbb R.$
\end{theom}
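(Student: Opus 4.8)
The plan is to reduce the statement to the construction of an entire function of arbitrarily small exponential type and then to produce that function from an outer function by means of a multiplier. First I would remove the asymmetry of the spectral condition. Writing $f(x) = e^{i\delta x/2} g(x)$, the requirement $\mathrm{supp}(\widehat f) \subset [0,\delta]$ becomes $\mathrm{supp}(\widehat g) \subset [-\delta/2,\delta/2]$, while $|f| = |g|$. By the Paley--Wiener theorem this is the same as asking for $g \in L^2(\mathbb R)$ that extends to an entire function of exponential type at most $\delta/2$. Thus it suffices to find, for every $a > 0$, an entire function $g$ of exponential type $\le a$ with $g \in L^2(\mathbb R)$, $g \not\equiv 0$, and $|g(x)| \le \omega(x)$ for all real $x$. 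I set $h := \log(1/\omega)$, so that $h \ge 0$ is Lipschitz, absolutely continuous, and $\int_{\mathbb R} h(t)(1+t^2)^{-1}\,dt < \infty$.

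Next I would manufacture the correct modulus using an outer function. Because the logarithmic integral of $\omega$ converges, the function
$$ O(z) = \exp\!\left( \frac{1}{\pi i}\int_{\mathbb R} \frac{1+tz}{t-z}\,\frac{-h(t)}{1+t^2}\,dt \right), \qquad \mathrm{Im}\, z > 0, $$
is analytic and of bounded type in the upper half-plane, with $|O| \le 1$ there and boundary modulus $|O(x)| = \omega(x)$ for every real $x$ (using continuity of $\omega$). The spectrum of $O$, however, lies only in the half-line $[0,\infty)$: the function $O$ is neither entire nor of finite exponential type, and repairing this is precisely where the regularity hypotheses must be used.

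The heart of the argument is therefore a multiplier construction. I would build an entire function $m$ of exponential type $\le a$ with $|m(x)| \le 1$ on $\mathbb R$ and with enough decay that the product $g := O \cdot m$, interpreted through boundary values, extends to an entire function lying in $L^2(\mathbb R)$ and does not vanish identically. The multiplier is obtained by controlling the harmonic conjugate $\tilde h$ of $h$: since $h'$ is bounded, $\tilde h$ and the phase of $O$ are tame, so $h$ may be replaced by a piecewise-linear (atomic) profile of the same logarithmic size, and $m$ assembled as a canonical product whose zeros sit at these atoms. One then checks that the type of $m$ can be forced below the prescribed $a$ while $|m|$ compensates the growth of $1/|O|$ in the lower half-plane, so that the two half-plane pieces glue into a single entire function; the $L^2$ bound follows from $|g| \le \omega \le 1$ together with the decay forced by $h$, and $g \not\equiv 0$ follows because the construction keeps $\log|g|$ integrable against the Poisson kernel.

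I expect the main obstacle to be exactly this multiplier step: ensuring simultaneously that the exponential type stays below the arbitrarily small $a$, that $g$ is genuinely entire, and that $g \not\equiv 0$. This is the classical core of the Beurling--Malliavin circle of ideas, and it is here that the Lipschitz hypothesis is indispensable, since it yields the uniform conjugate-function (Hilbert-transform) estimates that make the atomic approximation of $h$ and the control of the type of $m$ effective. Heuristically, convergence of the logarithmic integral alone is what prevents $g$ from being forced to vanish, whereas the regularity of $\log(1/\omega)$ is what permits the type to be pushed all the way down to an arbitrarily small value, and hence $\delta$ to be arbitrarily small.
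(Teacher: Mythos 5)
Your proposal correctly sets up the classical reduction (modulation plus Paley--Wiener, so the problem becomes: for every $a>0$ find a nonzero entire function $g$ of exponential type at most $a$, square integrable on $\mathbb R$, with $|g|\leqslant\omega$), and your outer function $O$ with boundary modulus $\omega$ is written down correctly. But everything after that is a description of what a proof would have to accomplish, not a proof. The entire content of Theorem~\ref{thmBM} is concentrated in the step you summarize as ``$h$ may be replaced by a piecewise-linear (atomic) profile\ldots\ and $m$ assembled as a canonical product whose zeros sit at these atoms. One then checks that the type of $m$ can be forced below the prescribed $a$'': you give no construction of the atoms, no estimate showing the canonical product has type $\leqslant a$ (rather than merely finite type), and no argument that the product $Om$ is nonzero and square integrable. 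Note also that once you have an entire $m$ of type $\leqslant a$ with $|m|\leqslant\omega$ on $\mathbb R$ and $m\in L^2$, the outer function is superfluous --- $m$ itself is the desired function --- so the outer function step does not actually reduce the difficulty; it only restates it. This deferred step is exactly the ``complicated nonlinear construction'' that the paper itself flags when it says there is no easy proof of Theorem~\ref{thmBM}; accordingly, the paper does not prove this theorem at all, but cites it from Beurling and Malliavin's 1962 paper and uses it (together with its quantitative variant, Theorem~\ref{hanshlagonedim} of Han and Schlag) as a black box inside Lemmas~\ref{lem1} and~\ref{lem2}. So there is no proof in the paper to compare against, and your attempt, as written, omits precisely the part that makes the theorem a theorem.

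There is also a structural flaw in the proposed conclusion. The outer function $O$ is analytic only in the upper half-plane, and multiplying it by an entire function $m$ cannot make it entire: the assertion that ``the two half-plane pieces glue into a single entire function'' has no justification, and is false in general, since the boundary values of $O$ need not continue analytically across $\mathbb R$; the spectrum of $Om$ is a priori only contained in a half-line, which is consistent with $Om$ admitting no entire extension whatsoever. In the genuine arguments (Beurling--Malliavin, Koosis, or the Mashreghi--Nazarov--Havin ``seventh proof'') one does not repair $O$; one constructs the bandlimited function directly, and the regularity hypothesis enters through quantitative control of the Hilbert transform $\mathcal H\log(1/\omega)$ --- exactly the form it takes in Theorem~\ref{hanshlagonedim}, whose hypothesis is $\|(\mathcal H\log(1/\omega))^\prime\|_\infty\leqslant\pi\sigma/2$. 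A tractable route for you would be to first prove the Havin--Mashreghi-type statement under that Hilbert-transform hypothesis and then reduce the Lipschitz hypothesis of Theorem~\ref{thmBM} to it; but both steps are substantial and neither is addressed by your sketch.
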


Let us make a few remarks about Theorem~\ref{thmBM} here.

\begin{rem}
If the claim of Theorem~\ref{thmBM} holds for some function, then we shall call this function a Beurling and Malliavin majorant.
\end{rem}

\begin{rem}
A square-integrable function whose Fourier transform has compact support will be further called bandlimited.
\end{rem}

This result is among the most spectacular results in Analysis obtained in the XXth century. For its significance in Harmonic Analysis, see the books~\cite{koos1} and~\cite{koos2} and the paper~\cite{nazhav}. We shall only mention that this theorem is a crucial tool in the proof of the Second Beurling and Malliavin Theorem about the radius of completeness of an exponential system, see e.g.~\cite{polt},~\cite{makpol}.
One can find our recent results connected with Theorem~\ref{thmBM} in  papers~\cite{vasil} and~\cite{vasil1}.

Let us give a very brief history of the problem since Beurling and Malliavin. There is a different variant of the First Beurling and Malliavin Theorem, namely the one dealing with weights of the form $|f|$, where $f$ is an entire function of Cartwright class. Note that Beurling and Malliavin in their paper of $1962$ proved both variants using the same idea, while Koosis clarified later that both variants are actually equivalent. 

Theorem~\ref{thmBM} is of great interest by itself since it can be thought of as a limit of applicability of the following well known and very important maxim called the Uncertainty Principle in Harmonic Analysis : ``It is impossible for a nonzero function and its Fourier transform to be simultaneously very small, unless the function is zero''. Indeed, Theorem~\ref{thmBM} indicates that there are nonzero functions that are ``small'' together with their Fourier transform.

To the best of our knowledge, one can not find an easy proof of Theorem~\ref{thmBM} since all known proofs of this result build upon certain complicated nonlinear constructions.

Note that even an easy consequence of Theorem~\ref{thmBM}, where the Lipschitz regularity of  the function $\log(1/\omega)$ is replaced by its decay on the negative ray and its growth on the positive one has important applications  for exponential systems. The main sources for this discussion are~\cite{redh} and~\cite{march}.

We would like to remind to the reader that the one-dimensional First Beurling and Malliavin Theorem (we shall further sometimes write ``BM Theorem'' instead of ``Beurling and Malliavin Theorem'' to save space) in the formulation of  Theorem~\ref{thmBM} was used by Ph. Tchami\-tchian for Beurling algebras, see~\cite{tcham}, by A. Beurling for the logarithmic transform of charges, see~\cite{havjor} and by J. Bourgain and S. Dyatlov for the fractal uncertainty principle, see~\cite{boudya}. 

\begin{rem}
For more on the fractal uncertainty principle, see papers~\cite{schlhan},~\cite{jami} and~\cite{jinzhang}. Note that in all these works the authors use Theorem~\ref{thmBM} (or its closely related variant to be revealed in a moment in Theorem~\ref{hanshlagonedim} of this paper) to prove the corresponding fractal uncertainty principle.   
\end{rem}

\begin{rem}
The condition  $\log(1/\omega) \in L^1 (\mathbb R, (1+x^2)^{-1}dx)$ imposed  on the function $\log(1/\omega)$ in Theorem~\ref{thmBM}  is the ``essential one'', since it controls the smallness of $\omega$, whereas the Lipschitz continuity is an auxiliary though very important regularity assumption. 
\end{rem}


There is a variant of Theorem~\ref{thmBM} due to V. Havin and J. Mashreghi  that is slightly easier to prove, see~\cite{havmas1},~\cite{barhav} and~\cite{belhav}. However, it can be treated as a replacement of Theorem~\ref{thmBM} in many cases. Recall a quantitative variant of the result by Havin and Mashreghi, see~\cite{schlhan}, Theorem B4.
\begin{theom} (R. Han and W. Schlag)
	\label{hanshlagonedim}
	Assume that $\omega: \mathbb R\rightarrow (0,1)$ satisfies $\log(1/\omega) \in L^1(\mathbb R, (1+x^2)^{-1}dx)$ and $\|(\mathcal H\log(1/\omega))^\prime\|_\infty \leqslant \pi\sigma/2$,
	for some $0<\sigma<1/10$, where $\mathcal H$ is the Hilbert transform on the real line. Then, there exists $\psi \in L^2(\mathbb R)$ with 
$\mathrm{supp}(\psi) \subset [0,\sigma]$, $|\widehat{\psi}| \leqslant \omega$ and $C(\sigma) \omega \leqslant |\widehat{\psi}|$  on  $[-3/4,3/4]$, where $C(\sigma)$ is an explicit positive constant that  depends on $\sigma$ only.
\end{theom}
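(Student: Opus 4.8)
The plan is to pass to the frequency side and construct $g:=\widehat\psi$ directly, via the outer-function machinery of Hardy spaces on a half-plane, with the Lipschitz bound on the conjugate function supplying the one decisive estimate. Fix the convention $\widehat\psi(\xi)=\int_{\mathbb R}\psi(x)e^{-2\pi i x\xi}\,dx$. By the Paley--Wiener theorem, the requirement $\mathrm{supp}(\psi)\subset[0,\sigma]$ is equivalent to $g$ lying in the model (Paley--Wiener) space attached to the inner function $\Theta_\sigma(z)=e^{2\pi i\sigma z}$; concretely, $g\in H^2(\mathbb C_-)$ together with $e^{2\pi i\sigma z}g\in H^2(\mathbb C_+)$. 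The theorem thus reduces to producing a nonzero such $g$ with $|g|\le\omega$ on $\mathbb R$ and $|g|\ge C(\sigma)\,\omega$ on $[-3/4,3/4]$.

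First I would build the outer factor that carries the weight. Writing $u=\log(1/\omega)\ge 0$ and $v=\mathcal H u$, the Schwarz integral
\[
B(z)=\exp\left(\frac{1}{\pi i}\int_{\mathbb R}\Big(\frac{1}{t-z}-\frac{t}{1+t^2}\Big)\log\omega(t)\,dt\right),\qquad z\in\mathbb C_+,
\]
defines an outer function with $|B|=\omega$ and $\arg B=-v$ on $\mathbb R$, and $|B|\le 1$ in $\mathbb C_+$ since $\log\omega\le 0$; the reflection $B_-(z)=\overline{B(\bar z)}$ is the matching outer function in $\mathbb C_-$, satisfying $B_-=B\,e^{2iv}$ on $\mathbb R$. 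The hypothesis $\log(1/\omega)\in L^1(\mathbb R,(1+x^2)^{-1}dx)$ is exactly what guarantees convergence of this integral, so the \emph{essential} condition on $\omega$ is used precisely here. The outer function encodes the size $\omega$, but multiplying a band-limited function by $B_-$ destroys band-limitedness, so the construction cannot be a naive product.

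The crux is therefore to land in the model space. Seeking $g$ of modulus comparable to $\omega$, one factors out the outer part and is reduced to a Riemann--Hilbert (Toeplitz-kernel) problem: find a bounded analytic multiplier solving the two one-sided membership conditions, whose unimodular boundary symbol on $\mathbb R$ has argument $2\pi\sigma x-2v(x)$, the factor $2v$ arising from $B_-/B=e^{2iv}$. Its derivative obeys
\[
\frac{d}{dx}\big(2\pi\sigma x-2v(x)\big)=2\pi\sigma-2v'(x)\ge 2\pi\sigma-2\|v'\|_\infty\ge 2\pi\sigma-\pi\sigma=\pi\sigma>0,
\]
where I used exactly the assumption $\|(\mathcal H\log(1/\omega))'\|_\infty=\|v'\|_\infty\le\pi\sigma/2$. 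This strict positivity says the symbol winds in the favourable direction at rate at least $\pi\sigma$, which forces the associated Toeplitz kernel to be nontrivial and yields a nonzero admissible $g$ with $|g|\le\omega$. I expect this step — converting the Lipschitz bound on the conjugate function into genuine band-limitedness while preserving the pointwise majorization — to be the main obstacle, and it is exactly the point at which the numerical threshold $\pi\sigma/2$ is calibrated.

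Finally I would extract the quantitative two-sided bound. Here the elementary band-limited building block
\[
s(z)=\frac{1-e^{-2\pi i\sigma z}}{2\pi i\sigma z},
\]
an entire function of exponential type $2\pi\sigma$ with $|s|\le 1$ on $\mathbb R$ and $s(0)=1$, serves as a comparison: its modulus on the fixed interval $[-3/4,3/4]$ stays bounded below by an explicit constant depending only on $\sigma$ because $\sigma<1/10$ keeps $\pi\sigma|x|$ small there, which is the role of the numbers $1/10$ and $3/4$. Comparing the extremal solution $g$ with $s$ through the reproducing-kernel/extremal structure of the model space (and using $|B|=\omega$) gives both the lower bound $|g|\ge C(\sigma)\omega$ on $[-3/4,3/4]$ with an explicit $C(\sigma)$ and the $L^2$-membership. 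Setting $\psi=\check g$ then completes the construction.
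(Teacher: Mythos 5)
First, note that the paper itself does not prove this statement: it is quoted as Theorem B4 of Han--Schlag~\cite{schlhan} and used as a black box, so your proposal must be judged on its own merits and against the Havin--Mashreghi-type argument that Han and Schlag actually give. Your frame is the right one: the Paley--Wiener characterization ($\mathrm{supp}(\psi)\subset[0,\sigma]$ iff $g\in H^2(\mathbb C_-)$ and $e^{2\pi i\sigma z}g\in H^2(\mathbb C_+)$), the outer function $B$ with $|B|=\omega$ absorbing the logarithmic-integral hypothesis, and the reduction to a Riemann--Hilbert/Toeplitz problem whose unimodular symbol has phase $2\pi\sigma x\mp 2v$, monotone with rate at least $\pi\sigma$ by the Lipschitz hypothesis. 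The genuine gap is exactly at the step you flag as ``the main obstacle'': the claim that monotonicity of the phase at a definite rate \emph{forces} the Toeplitz kernel to contain a nonzero element with $|g|\leqslant\omega$ is not a quotable fact --- it is the theorem itself, restated. Soft kernel-nontriviality criteria (Makarov--Poltoratski type) apply to phases of the form $-\theta_I+2\widetilde m$ with $\theta_I$ the argument of an inner function and $m\geqslant 0$, $m\in L^1(dP)$; your phase is $2\pi\sigma x+2\widetilde\Omega$ with $\Omega=\log(1/\omega)\geqslant 0$, i.e.\ the perturbation is the conjugate of a \emph{positive} $L^1(dP)$ function entering with the opposite sign, which is precisely the analytic content of the majorization $|g|\leqslant\omega$ (with the ``easy'' sign one would simply multiply $B$ by a Paley--Wiener function --- which, as you yourself note, destroys band-limitedness). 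The known proof closes this gap constructively: one corrects $\Omega$ upward to $U=\Omega+V$ with $V\geqslant 0$, $V\in L^1(dP)$, chosen so that $\widetilde V$ equals the bounded sawtooth $\pi k(\xi)-\bigl(\pi\sigma\xi+\widetilde\Omega(\xi)\bigr)$, $k$ integer-valued; then the total phase becomes the argument of an inner function and $g$ can be taken outer with $|g|=e^{-U}\leqslant\omega$. Monotonicity of $\pi\sigma\xi+\widetilde\Omega$ is used there not as an abstract winding condition but to guarantee that all sawtooth jumps have one sign, hence its conjugate $V$ has only positive logarithmic spikes and is bounded below --- the one-signedness of the correction is the crux, and it is absent from your argument.

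The second gap is the explicit two-sided bound. A comparison of ``the extremal solution'' with $s(z)=(1-e^{-2\pi i\sigma z})/(2\pi i\sigma z)$ through reproducing kernels cannot yield $|g|\geqslant C(\sigma)\omega$ on $[-3/4,3/4]$: nothing prevents a nonzero (even extremal) element of the kernel from vanishing at a point of that interval, and since $\omega>0$ any such zero kills the bound. In the actual proof the lower bound is read off from the construction just described: the zeros of $g$ are the jump points of the sawtooth, whose spacing is $\gtrsim 1/\sigma$ because the phase derivative is also bounded \emph{above} (by roughly $3\pi\sigma/2$), and since $\sigma<1/10$ one can phase the sawtooth so that no zero comes near $[-3/4,3/4]$; then $V$, hence $\log(\omega/|g|)$, is explicitly bounded on that interval. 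Thus the explicit constant $C(\sigma)$, and the roles of the numbers $1/10$ and $3/4$, live inside the constructive step your proposal black-boxes; without it, neither the existence of an admissible $g$ nor the quantitative conclusion is established.
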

We would like to stress that the Hilbert transform used in Theorem~\ref{hanshlagonedim} is not the classical one. For a formal definition of the  Hilbert transform $\mathcal H$ from Theorem~\ref{hanshlagonedim}, see the section preliminaries below.

Theorem~\ref{hanshlagonedim} in comparison to Theorem~\ref{thmBM} gives a quantitative  lower bound on the constructed bandlimited function. According to\footnote{A. Borichev, personal communication}, there is no version of Theorem~\ref{thmBM} in the literature that has this property.  Moreover, Theorem~\ref{hanshlagonedim} implies an
explicit essential spectral gap for convex co-compact hyperbolic surfaces in the case when the Hausdorff dimension of their limit set is close to $1$, see e.g. paper~\cite{jinzhang}.

\bigskip

A natural question that can be posed is to find multidimensional versions of the Beurling and Malliavin Theorems.  
Indeed, according to\footnote{A. Poltoratski, personal communication}, L. H\"ormander was trying to find in the late $1960$s multidimensional analogues of the BM Theorems, without substantial progress, however. This is also mentioned in the paper~\cite{hor}\footnote{Indeed, let us cite H\"ormander in~\cite{hor}: ``Theorem 1.1 is also a consequence of theorems of Beurling and Malliavin when $n=1$. However, no analogue of these is known when $n>1$, \ldots \,.'' Here, H\"ormander clearly indicates both theorems in the $1962$ paper by Beurling and Malliavin.}.
We were able to prove the following result which is, in our opinion, the multidimensional analogue of the First BM Theorem. This result can be regarded as the main one in the present paper.

\begin{theorem}
\label{bermalrad}
Let $\omega: \mathbb R^d\rightarrow(0,1]$ be a radial  function  
such that 
\begin{itemize}
		\item[---] 
		$\log(1/\omega) \in L^1 (\mathbb R^d, (1+|x|^2)^{-(d+1)/2}dx)$,
		\item[---] 
		$\log(1/\omega)$ is a Lipschitz function. 
\end{itemize}	
		Then, for each $\sigma > 0$ there exists a function $f \in L^2 (\mathbb R^d),$ not identically zero, such that $\mathrm{supp}(\widehat f) \subset B(0,\sigma)$ and $|f| \leqslant \omega$ on $\mathbb R^d.$
\end{theorem}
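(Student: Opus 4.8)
The plan is to reduce the radial $d$-dimensional problem to the one-dimensional Theorem~\ref{thmBM} by the method of descent, treating odd and even dimensions separately. Write $\omega(x)=\omega_0(|x|)$ for the radial profile. The first observation is that the weighted integrability hypothesis is tailored exactly to the one-dimensional condition: in polar coordinates $\int_{\mathbb R^d}\log(1/\omega_0(|x|))\,(1+|x|^2)^{-(d+1)/2}\,dx$ equals, up to the surface constant $\sigma_{d-1}$, the integral $\int_0^\infty \log(1/\omega_0(r))\,r^{d-1}(1+r^2)^{-(d+1)/2}\,dr$, and since $r^{d-1}(1+r^2)^{-(d+1)/2}\asymp (1+r^2)^{-1}$ for $r\ge 1$ while $\log(1/\omega_0)$ is bounded near the origin (being nonnegative and Lipschitz), this is finite if and only if $\log(1/\omega_0)\in L^1(\mathbb R,(1+t^2)^{-1}dt)$ after even extension. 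The Lipschitz assumption passes directly to the even extension of $\omega_0$, so Theorem~\ref{thmBM} applies to the profile.

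Next I would manufacture a one-dimensional majorant that is even, band-limited and decaying. Applying Theorem~\ref{thmBM} to $\sqrt{\omega_0}$ (which inherits both hypotheses) with spectrum in $[0,\sigma/4]$ yields a nonzero $h\in L^2(\mathbb R)$ with $\mathrm{supp}(\widehat h)\subset[0,\sigma/4]$ and $|h|\le\sqrt{\omega_0}$. Setting $g(t)=h(t)\,h(-t)\,\chi(t)$, where $\chi$ is a fixed even, nonnegative band-limited function with $\mathrm{supp}(\widehat\chi)\subset[-\sigma/2,\sigma/2]$, $\|\chi\|_\infty\le 1$, $\chi(0)>0$ and rapid polynomial decay (a sufficiently high power of $\mathrm{sinc}$), produces an even $g$ with $\mathrm{supp}(\widehat g)\subset[-\sigma,\sigma]$, $|g|\le\omega_0$, $g\not\equiv 0$, and enough decay that $\int_0^\infty|g(r)|^2 r^{d-1}\,dr<\infty$ and $\widehat g\in C^m$. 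Thus $f(x):=g(|x|)$ lies in $L^2(\mathbb R^d)$, is not identically zero, and satisfies $|f|\le\omega$; only the spectral support remains to be checked.

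For the support I would use the intertwining relation $\mathcal F_{n+2}[F]=-\tfrac{1}{2\pi\rho}\tfrac{d}{d\rho}\mathcal F_{n}[F]$ between radial Fourier transforms in consecutive odd dimensions (equivalently, the projection-slice identity together with the Abel transform of the profile). When $d=2m+1$ is odd, the radial transform of $f$ is obtained from the cosine transform $\widehat g$ by the $m$-fold application of the \emph{local} operator $\left(-\tfrac{1}{2\pi\rho}\tfrac{d}{d\rho}\right)^m$; being local it cannot enlarge the support, and since $\widehat g\in C^m$ vanishes to the required order at $\pm\sigma$ the differentiations are legitimate. Hence $\mathrm{supp}(\widehat f)\subset B(0,\sigma)$, settling the odd-dimensional case directly with $f(x)=g(|x|)$.

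When $d=2m$ is even the descent has half-integer order and is genuinely nonlocal, so the naive choice $f(x)=g(|x|)$ fails because the half-order Abel transform of a band-limited function is no longer band-limited. Here I would instead descend by a single dimension to the \emph{already solved} odd dimension $d-1$. Integrating out one variable in physical space, $f_{d-1}(y)=\int_{\mathbb R}f_d(y,t)\,dt$, corresponds on the frequency side to restriction to a hyperplane, so the two radial transform profiles literally coincide, $\mathcal F_{d}[F_d]=\mathcal F_{d-1}[F_{d-1}]$, where $F_{d-1}(s)=2\int_s^\infty F_d(r)(r^2-s^2)^{-1/2}r\,dr$ is the half-order Abel transform of $F_d$. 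Taking $F_{d-1}$ to be the odd-dimensional solution and defining $F_d$ as its Weyl fractional-order inverse therefore forces $\mathrm{supp}(\mathcal F_d[F_d])\subset[0,\sigma]$ automatically. The main obstacle is precisely this step: one must verify that the fractional inversion preserves the pointwise majorant, $|F_d|=|A_{1/2}^{-1}F_{d-1}|\le\omega_0$, and it is here that Sonine's classical integral formula for Bessel functions supplies the explicit inversion kernel, while the Lipschitz regularity of $\log(1/\omega)$ controls the fractional derivative and prevents the majorant from being destroyed. Carrying out this estimate is the technical heart of the even-dimensional argument.
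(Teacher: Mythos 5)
Your one-dimensional reduction and your odd-dimensional argument are correct, and at one point tidier than the paper's: taking $g(t)=h(t)h(-t)\chi(t)$, with $h$ obtained from Theorem~\ref{thmBM} applied to $\sqrt{\omega_0}$, produces an even, nonzero, band-limited minorant in one stroke, bypassing the paper's Lemma~\ref{lem2}, where a possible zero of $f$ at the origin must be repaired by dividing by $x^N$; your factor $\chi$ also plays the role of the paper's replacement of $\omega$ by $\min(\omega,(1+|\cdot|)^{-Q})$. Iterating the local recursion $\mathcal F_{n+2}[F]=-\frac{1}{2\pi\rho}\frac{d}{d\rho}\mathcal F_{n}[F]$ is equivalent to the paper's use of Rayleigh's formula~\eqref{bessel2.5}, so odd dimensions are settled.

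The even-dimensional half, however, starts from a false premise and then leaves the real work undone. You assert that the naive choice $f(x)=g(|x|)$ fails in even dimensions because the half-order descent is nonlocal. It does not fail: proving that the naive radial extension works in even dimensions is precisely the content of Cases $3$ and $4$ of the paper's proof of Proposition~\ref{havmashrad}. Sonine's formula~\eqref{bessel4} writes $J_{d/2-1}(y)$ as an integral of $J_{d/2-1/2}(sy)$ over $s\in[1,\infty)$; since $s\geqslant 1$, for $|\xi|>\sigma$ the inner integral $\int_0^\infty r^{(d+1)/2}g(r)J_{(d+1)/2-1}(s|\xi|r)\,dr$ vanishes by the already-established odd-dimensional case in dimension $d+1$, whence $\widehat f(\xi)=0$. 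Conceptually, this is descent from the odd dimension \emph{above}: $g(|\cdot|)$ on $\mathbb R^d$ is the restriction to a hyperplane of $g(|\cdot|)$ on $\mathbb R^{d+1}$; restriction in space corresponds to integrating out one frequency variable, and the marginal of a function supported in $B(0,\sigma)\subset\mathbb R^{d+1}$ is supported in $B(0,\sigma)\subset\mathbb R^d$, with the profile, and hence the majorization $|g|\leqslant\omega_0$, untouched. A classical sanity check is the discontinuous integral $\int_0^\infty\sin(ar)J_0(\rho r)\,dr=0$ for $\rho>a$: the planar radial extension of a sinc-type profile is indeed band-limited.

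Your workaround instead descends from the odd dimension \emph{below}: you take the solution $F_{d-1}$ in dimension $d-1$ and define $F_d:=A_{1/2}^{-1}F_{d-1}$ by inverting the Abel transform. This makes the spectral condition automatic but sacrifices the only property that matters. The inverse Abel transform is a half-order fractional derivative, nonlocal and built from $F_{d-1}'$; there is no reason that $|A_{1/2}^{-1}F_{d-1}|\leqslant\omega_0$, since fractional differentiation does not preserve pointwise majorants, and the Lipschitz hypothesis on $\log(1/\omega)$ constrains the majorant, not the oscillation of the constructed function $F_{d-1}$, so it cannot control this quantity. You yourself flag exactly this verification as ``the technical heart'' and do not carry it out; since that single step is the entire content of the even case, your proposal as written proves the theorem only for odd $d$. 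The fix is to keep $f(x)=g(|x|)$ in every dimension and prove the vanishing of $\widehat f$ outside $B(0,\sigma)$ either by Sonine's formula~\eqref{bessel4}, as the paper does, or by hyperplane restriction from dimension $d+1$.
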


\begin{rem}
Throughout the present paper, the usual Euclidean norm in $\mathbb R^d$ is denoted by  $|\cdot|$.
\end{rem}

Note that Theorem~\ref{bermalrad} is indeed a generalization of the First BM Theorem to several dimensions.

\bigskip
We shall also derive from Theorem~\ref{bermalrad}  the following general (i.e. not necessarily radial) multidimensional majorant theorem, which is our second main result in this paper.
\begin{theorem}
	\label{nonradial}
Let $\omega: \mathbb R^d\rightarrow(0,1]$ be a function 
such that 
\begin{itemize}
		\item[---] 
		$\log(1/\omega) \in L^1 (\mathbb R^d, (1+|x|)^{-\gamma}dx)$, for some $\gamma<d+1$, 
		\item[---] 
		$\log(1/\omega)$ is a Lipschitz function. 
\end{itemize}
		Then, for each $\sigma > 0$ there exists a function $f \in L^2 (\mathbb R^d),$ not identically zero, such that $\mathrm{supp}(\widehat f) \subset B(0,\sigma)$ and $|f| \leqslant \omega$ on $\mathbb R^d.$
\end{theorem}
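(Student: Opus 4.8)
The plan is to \emph{reduce the general case to the radial case} already settled in Theorem~\ref{bermalrad}. Writing $u:=\log(1/\omega)\geqslant 0$ (recall $\omega\leqslant 1$), I would introduce the radial \emph{minorant} weight
$$\widetilde\omega(x):=\exp\bigl(-v(|x|)\bigr),\qquad v(r):=\max_{|y|=r}u(y),$$
the maximum being attained since $u$ is continuous and each sphere $\{|y|=r\}$ is compact. By construction $\widetilde\omega$ is radial and $0<\widetilde\omega\leqslant\omega\leqslant 1$, so any $f$ furnished by Theorem~\ref{bermalrad} for $\widetilde\omega$ automatically satisfies $|f|\leqslant\widetilde\omega\leqslant\omega$ with $\mathrm{supp}(\widehat f)\subset B(0,\sigma)$. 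It therefore suffices to verify that $\widetilde\omega$ meets the two hypotheses of Theorem~\ref{bermalrad}.

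The Lipschitz bound is the easy part. If $u$ is $L$-Lipschitz, then $v$ is $L$-Lipschitz on $[0,\infty)$: given $r_1<r_2$ and a maximizer $y_2$ with $|y_2|=r_2$, the radial rescaling $y_1:=(r_1/r_2)y_2$ satisfies $|y_1-y_2|=r_2-r_1$, whence $v(r_2)=u(y_2)\leqslant u(y_1)+L(r_2-r_1)\leqslant v(r_1)+L(r_2-r_1)$, and symmetrically. Since $\bigl||x|-|x'|\bigr|\leqslant|x-x'|$, the function $x\mapsto\log(1/\widetilde\omega)(x)=v(|x|)$ is again $L$-Lipschitz on $\mathbb R^d$.

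The integrability condition is the main obstacle, because passing from $\omega$ to $\widetilde\omega$ replaces a \emph{spherical average} of $u$ by its \emph{spherical maximum}, which a priori could be far larger. Here the Lipschitz hypothesis does the crucial work: if $u(y_0)=v(r)$ with $|y_0|=r$, then $u\geqslant v(r)/2$ on the spherical cap $\{y:|y|=r,\ |y-y_0|\leqslant v(r)/(2L)\}$, whose normalized surface measure is $\gtrsim\bigl(v(r)/(Lr)\bigr)^{d-1}$ for $r$ large (the bound $v(r)\leqslant u(0)+Lr$ guarantees that this is a genuine small cap once $r$ exceeds a fixed threshold). Integrating over the sphere yields the key comparison
$$A(r):=\frac{1}{|S^{d-1}|}\int_{S^{d-1}}u(r\theta)\,d\sigma(\theta)\ \gtrsim\ \frac{v(r)^{d}}{r^{d-1}},\qquad r\gg 1,$$
that is, $v(r)\lesssim r^{(d-1)/d}A(r)^{1/d}$.

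Passing to polar coordinates, the target tail integral $\int_1^\infty v(r)\,r^{d-1}(1+r^2)^{-(d+1)/2}\,dr\asymp\int_1^\infty v(r)\,r^{-2}\,dr$ is then dominated by $\int_1^\infty A(r)^{1/d}\,r^{(d-1)/d-2}\,dr$, and I would apply Hölder's inequality with exponents $d$ and $d/(d-1)$, writing the integrand as $\bigl(A(r)\,r^{\,d-1-\gamma}\bigr)^{1/d}\,r^{\gamma/d-2}$. This splits the integral into the product of $\bigl(\int_1^\infty A(r)\,r^{\,d-1-\gamma}\,dr\bigr)^{1/d}$, which is finite since $\log(1/\omega)\in L^1(\mathbb R^d,(1+|x|)^{-\gamma}dx)$, and $\bigl(\int_1^\infty r^{(\gamma/d-2)\,d/(d-1)}\,dr\bigr)^{(d-1)/d}$. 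A direct computation shows the latter exponent is strictly less than $-1$ \emph{precisely} when $\gamma<d+1$, so the second factor converges exactly under our standing assumption; the remaining range $r\lesssim 1$ contributes a finite amount because $v$ is bounded there. This verifies the hypotheses of Theorem~\ref{bermalrad} for $\widetilde\omega$ and completes the reduction. The delicate point, and the reason the exponent threshold $d+1$ is sharp for this argument, is exactly this max-to-average comparison: the gain of a power $1/d$ on $A(r)$ (at the cost of the factor $r^{(d-1)/d}$) is what the Hölder balancing converts into the strict inequality $\gamma<d+1$.
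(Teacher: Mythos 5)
Your proof is correct and follows essentially the same route as the paper's own proof: both arguments reduce to the radial Theorem~\ref{bermalrad} by majorizing $\log(1/\omega)$ with a radial Lipschitz function built from maxima, both use the Lipschitz hypothesis to convert the maximum into a lower bound for the weighted integral of $\log(1/\omega)$ near the maximizing point, and both conclude with H\"older's inequality, the threshold $\gamma<d+1$ emerging in each case as exactly the condition for the conjugate-exponent factor to converge. The differences are only in the bookkeeping: the paper maximizes over dyadic annuli $E_j$, producing a piecewise-constant majorant $\Omega_1$, a solid-ball comparison yielding $\sum_j \lambda_j^{d+1}2^{-j\gamma}<\infty$, and discrete H\"older with exponents $\bigl(d+1,(d+1)/d\bigr)$, whereas you maximize sphere by sphere, producing the cap comparison $A(r)\gtrsim v(r)^d/r^{d-1}$ and continuous H\"older with exponents $\bigl(d,d/(d-1)\bigr)$. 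Your variant is marginally cleaner on one point: your $v(|\cdot|)$ is automatically Lipschitz with the same constant, so you avoid the paper's final step, in which the step-function majorant $\Omega_1$ must still be mollified into a Lipschitz function (a step the paper dispatches with ``it is easy to modify''). Two small caveats you should record: for $d=1$ your H\"older exponents degenerate, but there $v\leqslant 2A$ holds trivially (the sphere is two points); and on the exceptional range of $r$ where your cap would be the whole sphere one likewise has $v\leqslant 2A$, which still suffices because $r^{-2}\leqslant r^{d-1-\gamma}$ for $r\geqslant 1$ and $\gamma\leqslant d+1$.
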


Several remarks are in order. First, notice that the bigger the parameter $\gamma$ is, the harder it is to prove Theorem~\ref{nonradial}. Second, the multidimensional generalization of the First BM Theorem that has been recently proved by A. Cohen in~\cite{coh2} requires bounds on three derivatives of the function $\log(1/\omega)$,  so   
our regularity assumption on $\log(1/\omega)$ is less restrictive than that in~\cite{coh2}. Third, it is not difficult to prove that in the case when $2\leqslant d$, the bound $\gamma< d+1$ is sharp within the framework of the weighted Lebesgue spaces $L^1(\mathbb R^d, (1+|x|)^{-\gamma}dx)$.

\begin{rem}
Note that in the case when the dimension of the ambient space is two or higher, we observe here a certain new effect. Indeed, Theorem~\ref{nonradial} and the very end of the discussion in the penultimate paragraph tell us that if $2\leqslant d$, then in this result there is no obvious ``critical parameter'' $\gamma$ within the framework of  weighted Lebesgue spaces $L^1(\mathbb R^d, (1+|x|)^{-\gamma}dx)$, in contrast to the one-dimensional case where $\gamma=2$ plays the role of  such parameter.
\end{rem}

\bigskip
We shall also prove the following variant of Theorem~\ref{bermalrad}.
\begin{prop}
	\label{havmashrad}
	Let $0<\sigma<1/10$ and let $w:\mathbb R^d\rightarrow (0,1]$ be a radial function satisfying $w(\cdot)=\mathcal\phi(|\cdot|)$ for a function $\phi: \mathbb R_+\rightarrow (0,1]$. Suppose that 
	\begin{enumerate}[label=(\roman*)]
		\item \label{one} $\phi\in L^2(\mathbb R_+,(1+x)^{2d+2}dx),$
		\item \label{two} $\log(1/\phi)\in L^1(\mathbb R_+, (1+x^2)^{-1}dx),$
		\item \label{three} $\|(\mathcal H_+\log(1/\phi))^\prime\|_\infty\leqslant \pi \sigma.$
		\end{enumerate}  
	Here, $\mathcal H_+$ is the Hilbert transform on the positive half-line. Then, there exists a function $\psi : \mathbb R^d \rightarrow \mathbb C$   that satisfies $ C(\sigma,d)\leqslant \|\psi\|_{L^2(B(0,1))}$, $\psi \in L^2 (\mathbb R^d)$ and $\mathrm{supp} (\widehat{\psi}) \subset B(0, \sigma)$ and such that $|\psi| \leqslant w$ on $\mathbb R^d$, where the constant $C(\sigma,d)$ is explicit and depends on $\sigma$ and $d$ only.
	\end{prop}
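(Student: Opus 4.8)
The plan is to reduce the radial $d$-dimensional problem to the one-dimensional Theorem~\ref{hanshlagonedim} of Han and Schlag, and then to lift the resulting bandlimited function radially into $\mathbb R^d$, treating odd and even $d$ by different devices. First I would symmetrise, setting $\tilde\omega(x):=\phi(|x|)$ for $x\in\mathbb R$, an even weight on the line. Hypothesis~\ref{two} immediately gives $\log(1/\tilde\omega)\in L^1(\mathbb R,(1+x^2)^{-1}dx)$, the integral over $\mathbb R$ being twice the one over $\mathbb R_+$. The very reason the half-line transform $\mathcal H_+$ is introduced in~\ref{three} is that for an even function $V=\log(1/\tilde\omega)$ the line transform $\mathcal H V$ and $\mathcal H_+ v$ (with $v=\log(1/\phi)$) differ by an explicit constant factor, fixed in the preliminaries precisely so that~\ref{three} becomes the bound $\|(\mathcal H\log(1/\tilde\omega))'\|_\infty\le\pi\sigma/2$ demanded by Theorem~\ref{hanshlagonedim}, with the same $\sigma<1/10$. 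Applying that theorem yields $\psi_1\in L^2(\mathbb R)$ with $\mathrm{supp}(\psi_1)\subset[0,\sigma]$; writing $g_1:=\widehat{\psi_1}$ one has $\mathrm{supp}(\widehat{g_1})\subset[-\sigma,0]$ together with the two-sided estimate $C(\sigma)\tilde\omega\le|g_1|\le\tilde\omega$ on $[-3/4,3/4]$. Replacing $g_1$ by the product $g(x):=g_1(x)\,g_1(-x)$ produces a \emph{genuinely even} bandlimited function with $\mathrm{supp}(\widehat g)\subset[-\sigma,\sigma]$, with $|g|\le\tilde\omega^2\le\phi(|\cdot|)$, and still with a positive lower bound on $[-3/4,3/4]$.

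For odd $d$ the lift is direct: I would set $\psi(x):=g(|x|)$. Because $d$ is odd the dimension-walk identity $\mathcal F_d=\bigl(-\tfrac{1}{2\pi\rho}\tfrac{d}{d\rho}\bigr)^{(d-1)/2}\mathcal F_1$ involves an \emph{integer} number of differentiations (equivalently, the half-integer-order Bessel kernel $J_{d/2-1}$ is elementary), and since differentiation does not enlarge support, $\mathrm{supp}(\widehat\psi)\subset\overline{B(0,\sigma)}$. Domination is immediate, $|\psi(x)|=|g(|x|)|\le\phi(|x|)=w(x)$, and $\psi\in L^2(\mathbb R^d)$ because $\|\psi\|_{L^2(\mathbb R^d)}^2=c_d\int_0^\infty|g(r)|^2 r^{d-1}dr\le c_d\int_0^\infty\phi(r)^2 r^{d-1}dr<\infty$ by~\ref{one}. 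The quantitative lower bound then follows by integrating $|g|\ge C(\sigma)\phi$ on $[0,3/4]$ against $r^{d-1}dr$.

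The even case is where the real work lies, since now the naive profile $g(|\cdot|)$ is \emph{not} bandlimited to the ball: the dimension walk would require a half-order fractional derivative of $\widehat g$, and such a derivative enlarges the support. Here I would invoke the Hadamard method of descent together with Sonin's formula. On the odd-dimensional space $\mathbb R^{d+1}$ the function $y\mapsto g(|y|)$ is bandlimited to $B(0,\sigma)$ by the previous paragraph; integrating out one coordinate, $\psi(x):=\int_{\mathbb R}g\bigl(\sqrt{|x|^2+t^2}\bigr)\,dt$ has Fourier transform equal to the corresponding slice of the $(d+1)$-dimensional transform, so $\mathrm{supp}(\widehat\psi)\subset\overline{B(0,\sigma)}$. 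This descent integral is an Abel-type (half-order) transform in the radial variable, and Sonin's classical integral is exactly the identity that evaluates it in terms of the even-order kernel $J_{d/2-1}$, giving quantitative control of $\psi$. The obstacle is domination in the correct direction: one only gets $|\psi(r)|\le\int_{\mathbb R}\phi(\sqrt{r^2+t^2})\,dt$, which exceeds $\phi(r)$ by the descent width. I would absorb this by applying Theorem~\ref{hanshlagonedim} not to $\phi$ but to a polynomially corrected weight $\phi_1$ with $\int_{\mathbb R}\phi_1(\sqrt{r^2+t^2})\,dt\le\phi(r)$; dividing by a polynomial changes $\log(1/\phi)$ only by $\log$ of a polynomial, which remains in $L^1((1+x^2)^{-1}dx)$ and perturbs the Hilbert-transform derivative in~\ref{three} by a controlled amount, so the hypotheses survive. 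It is here that the strong weight $(1+x)^{2d+2}$ in~\ref{one} is consumed, since it guarantees simultaneously the convergence of the descent integral, the compatibility of the correction, and the square-integrability $\psi\in L^2(\mathbb R^d)$.

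The main obstacle is thus the even-dimensional domination step: arranging $|\psi|\le w$ with constant one while simultaneously keeping $\mathrm{supp}(\widehat\psi)\subset B(0,\sigma)$ and $\psi\in L^2(\mathbb R^d)$. Controlling the direction of the inequality under the averaging intrinsic to the descent is exactly what forces the passage through Sonin's formula and the polynomially weighted hypothesis~\ref{one}; the odd case, by contrast, is essentially formal once the one-dimensional input is in hand. In both cases the explicit lower bound $C(\sigma,d)\le\|\psi\|_{L^2(B(0,1))}$ is inherited from the explicit Han–Schlag lower bound $C(\sigma)\phi\le|g|$ on $[-3/4,3/4]$, integrated against $r^{d-1}\,dr$ over $[0,3/4]$, with the constant depending only on $\sigma$ and $d$ after the normalisation built into Theorem~\ref{hanshlagonedim}.
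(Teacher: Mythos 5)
Your first half is, in substance, the paper's own argument: the even extension plus identity~\eqref{frenchfries} (note that for even functions the two transforms coincide exactly, no constant factor is involved), an application of Theorem~\ref{hanshlagonedim}, a symmetrization (your product $g_1(\cdot)g_1(-\cdot)$ does the same job as the paper's sum $f(\cdot)+f(-\cdot)$), the radial lift $\psi=g(|\cdot|)$, and the odd-dimensional case via the integer-order dimension walk, which is exactly the paper's use of Rayleigh's formula~\eqref{bessel2}. The genuine gap is the even-dimensional case, and it sits precisely where you place ``the real work''. Your descent is performed on the \emph{spatial} side, $\psi(x):=\int_{\mathbb R}g\bigl(\sqrt{|x|^2+t^2}\bigr)\,dt$, which, as you concede, destroys the pointwise bound $|\psi|\leqslant w$. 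The repair you propose --- apply Theorem~\ref{hanshlagonedim} to a polynomially corrected weight $\phi_1$ --- does not work. Take the natural correction $\phi_1(x)=\phi(x)(1+x^2)^{-N/2}$ (one needs $N\geqslant 2$ just to make the descent integral converge). Then $\log(1/\phi_1)=\log(1/\phi)+\tfrac N2\log(1+x^2)$, and in the paper's normalization $\mathcal H_+\bigl[\tfrac12\log(1+(\cdot)^2)\bigr](x)=\pm\pi\arctan x+\mathrm{const}$, whose derivative has sup norm $\pi$. Hence $\|(\mathcal H_+\log(1/\phi_1))^\prime\|_\infty$ exceeds $\|(\mathcal H_+\log(1/\phi))^\prime\|_\infty$ by roughly $N\pi\geqslant 2\pi$, while Theorem~\ref{hanshlagonedim} demands a bound of $\pi\sigma/2<\pi/20$. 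This is not a ``controlled perturbation'': the admissible size of the Hilbert-transform derivative is exactly what fixes the size of the spectrum, and it cannot absorb a constant of order $d$. Moreover, even the legitimate form of this trick (multiply the Han--Schlag output by a \emph{fixed} bandlimited function with decay $(1+|x|)^{-2}$, accepting a slightly larger spectrum) does not save the descent: you would still need $\int_{\mathbb R}\phi\bigl(\sqrt{r^2+t^2}\bigr)\bigl(1+\sqrt{r^2+t^2}\bigr)^{-2}dt\lesssim\phi(r)$, and nothing in hypotheses \ref{one}--\ref{three} makes $\phi$ monotone or prevents $\phi$ from being far smaller at radius $r$ than at larger radii, so the comparison is unjustified.

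The paper's even-dimensional argument avoids all of this by performing the descent on the \emph{frequency} side. One keeps $\psi=g(|\cdot|)$ with the very same $g$, so that $|\psi|\leqslant w$ is trivially preserved, and uses Sonin's second integral~\eqref{bessel4},
\[
J_{d/2-1}(y)=c(d)\,y^{1/2}\int_1^\infty \frac{J_{d/2-1/2}(sy)}{s^{d/2-3/2}}\,(s^2-1)^{-1/2}\,ds,
\]
to write $\widehat\psi(\xi)$ as an average over dilations $s\geqslant1$ of the odd-dimensional $(d+1)$ radial transforms of $g$ evaluated at the frequencies $s|\xi|\geqslant|\xi|$, see~\eqref{eqlalalalend}; each inner integral vanishes for $|\xi|>\sigma$ by the odd-dimensional moment identities~\eqref{vazhno}. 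Support and domination are thus decoupled, and no correction of the weight is needed. In your scheme Sonin's formula appears only as a device for ``quantitative control'' of an Abel transform, not in this structural role; to repair your proof you would have to move the averaging to the Fourier side, i.e.\ essentially reproduce the paper's Cases 3 and 4.
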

	
\begin{rem}
In Theorems~\ref{bermalrad} and~\ref{nonradial} and in Proposition~\ref{havmashrad} the bandlimited functions that we construct are in fact radial.
\end{rem}

The formal definition of the ``half'' Hilbert transform $\mathcal H_+$ will be given in the section preliminaries below.

Proposition~\ref{havmashrad} can be thought of as a multidimensional generalization of Theorem~\ref{hanshlagonedim}. The advantage of this version of this theorem is that it provides an explicit lower bound on the constructed bandlimited function in several dimensions.

\bigskip
Our proofs of Theorem~\ref{bermalrad} and Proposition~\ref{havmashrad} are quite simple and not technically involved. However, the proof is harder in the case of even dimensions since it  uses one rare formula from the Bessel functions theory, which is called the second Sonine integral, see formula~\eqref{bessel3} below. Indeed, let us briefly describe the main steps of the method that we are using in this paper to prove Proposition~\ref{havmashrad}.
\begin{enumerate}
\item[---] First, we prove an analogue of the Hahn--Schlag Theorem~\ref{hanshlagonedim} for the positive semi-axis, where the role of the Fourier transform is played by the cosine transform. Here, we shall use the fact that a Lipschitz function that is different from zero at  one point (more specifically, at zero) in fact can not be an identical zero, with an explicit lower bound on the $L^2$ norm. This step corresponds to Lemma~\ref{lem1} below.
\item[---] Second, we take a sufficient number of derivatives of the function constructed at the previous step and use Lord Ray\-leigh’s formula to deduce  in the odd-dimensional case the existence of desired functions with small spectrum. This step corresponds to the first and the second cases in the proof of Proposition~\ref{havmashrad}  below. 
\item[---] Third, the even-dimensional case is then derived from the odd-dimensional one, via Sonin's integral formula. This step corresponds to the third and the fourth cases in the proof of Proposition~\ref{havmashrad} below.
\end{enumerate}
 
 We note one difficulty in the proof of Theorem~\ref{bermalrad}, in comparison with the proof of Proposition~\ref{havmashrad}. As mentioned above, in Theorem~\ref{bermalrad} a lower bound on the constructed bandlimited function $f$ is not available, so this function, a priori, may turn out to be equal to zero at zero. However, this obstacle is quite easy to overcome by dividing the function $f$ by $x^N$, where $N$ is the order of the zero of $f$ at zero.
 
 We would like to emphasize the analogy of the main steps of the proof of results of this paper with those of the classical proof of the Huygens--Fresnel principle for the wave equation in Partial Differential Equations. This principle is only true in odd dimensions, and in the case of even dimensions only its weak version is valid, see e.g.~\cite{dyazwo} for a modern approach. In addition, the classical proof the Huygens--Fresnel principle via the Hadamard Method of Descent in the case of even dimensions is based on that in the odd-dimensional case, see e.g.~\cite{berjo} (see also~\cite{berver} for far-reaching generalizations of the Huygens--Fresnel principle), exactly as in the method described in the penultimate paragraph.

\bigskip
We should finally mention a number of open questions concerning Theorem~\ref{bermalrad}.  The first question consists in determining of whether Theorem~\ref{bermalrad} can be generalized to more general majorants $\omega$. 
 The second question is that 
it would be interesting to check whether  the main result of this paper could be applied to the fractal uncertainty principle. The third question concerns another variant of the First BM Theorem, namely the one dealing with weights of the form $|f|$, where $f$ is a function in the Cartwright class. To our mind, it would be interesting to find out whether this variant holds in the case of several dimensions as well. Finally, we hope that our Theorem~\ref{bermalrad} will find applications to the quantum chaos and to spectral gaps of resolvents of hyperbolic manifolds. 

We also believe that the method that we have applied in this paper will be useful in other higher-dimensional harmonic analysis questions. 

The rest of the current paper is organized as follows. The second section is the  preliminaries, while the third one contains the proofs of Proposition~\ref{havmashrad} and Theorem~\ref{bermalrad}. In the fourth section, which is the Appendix, we deduce Theorem~\ref{nonradial}  from Theorem~\ref{bermalrad}.

\section{Preliminaries}

We accumulate in this section the list of the frequently used technical abbreviations and notations and some classical and/or well known results that we shall use later on.

Let $\mathrm{Lip}(\mathbb R^d)$ denote the space of Lipschitz functions in $\mathbb R^d$ (i.e. functions $f$ satisfying for all $x,y\in \mathbb R$ the following inequality: $|f(x)-f(y)|\leqslant C|x-y|$ with $C>0$ independent of $x,y$). By $\mathrm{Lip}(\mathbb R,\kappa)$ we shall denote all Lipschitz functions in $\mathbb R^d$ with the Lipschitz constant $\kappa$.

For a Borel set $\Omega$, its characteristic function is denoted here $\chi_\Omega$. The Euclidean ball centered at $x\in \mathbb R^d$ and of radius $r>0$ is denoted $B(x,r)$ throughout this paper.

Recall that the Poisson measure $dP$ on $\mathbb R$ is defined by the following formula
$$dP(x) := \frac{dx}{1+x^2}.$$
The corresponding weighted Lebesgue space $L^1(dP)$ is the space of all functions $f$ satisfying $\int_{\mathbb R} \big| f \big| dP < \infty.$
The expression $\int_{\mathbb R} \log(1/\omega)dP$ will be sometimes further referred to as the logarithmic integral of ${\omega}$. For two vectors $\xi\in \mathbb R^d$ and $x\in \mathbb R^d$ their scalar product will be denoted $\xi.x$. 

Throughout this paper the signs $\lesssim$ and $\gtrsim$ indicate that the left-hand (right-hand) part of an inequality is less than the right-hand (left-hand) part multiplied by a ``harmless'' constant, whose value may vary  from line to line. 

We would like to remind the reader of how one should modify the Cauchy kernel in order to extend the classical definition of the Hilbert transform up to the space $L^1(dP).$

\begin{defin}
	The \textit{Hilbert transform} of a function $f\in L^{1}(dP)$ is defined as the following principal value integral
	$$\mathcal H f(x):=\dashint_{\mathbb R}\Bigl(\frac{1}{x-t}+\frac{t}{t^2+1}\Bigr)f(t)dt.
	$$
	It is worth noting that the integral above converges for almost all $x\in\mathbb R$.
\end{defin}

For a function $f\in L^{1}(\mathbb R_+, dP)$  we shall denote the ``half'' Hilbert transform or the Hilbert transform on the half-line by the following formula valid for almost all nonnegative $x$
$$\mathcal H_+ f(x):=\dashint_{0}^\infty\frac{2x f(t)}{x^2-t^2}dt.
$$
It is easy to check that if $f\in L^1(dP)$ is an even function on the real line, then we have
\begin{equation}
\label{frenchfries}
\mathcal H f(x)=\mathcal H_+ (f\restr_{\mathbb R_+})(x)
\end{equation} 
for almost all nonnegative $x$.

For a function $f \in L^2(\mathbb R^d),$ by $\mathrm{spec}(f)$ we mean the spectrum of ${f}$\, i.e. the support of its Fourier transform, which in turn is defined for functions $f\in L^1(\mathbb R^d)$ and $\xi\in \mathbb R^d$ by
$$\widehat{f}(\xi):=\int_{\mathbb R^d} e^{-2\pi i \xi.x}f(x)dx,$$ with the usual extension via  Plancherel's theorem and continuity to $L^2(\mathbb R^d)$. Note that the spectrum of a function in $L^2(\mathbb R^d)$ is defined up to a set of Lebesgue measure zero. Let us also remark that here, the term ``not identically zero'' means ``not zero almost everywhere''. We shall further sometimes write just ``nonzero'' for brevity.

\bigskip

We are going to use several classical results from the theory of Bessel functions. Recall (see~\cite{graf}, Appendix B) that for an index $\alpha>-1/2$ the corresponding Bessel function of the first kind admits the following integral representation, called the Poisson representation
\begin{equation}
\label{bessel1}
J_\alpha (y) = \left(\frac{y}{2}\right)^\alpha\frac{1}{\Gamma(\alpha+1/2)\Gamma(1/2)} \int_{-1}^1 e^{iys}(1-s^2)^\alpha\frac{ds}{\sqrt{1-s^2}},
\end{equation}
valid for all nonnegative $y$. We use this representation as a definition of Bessel functions of the first kind within this range of $\alpha$ and $y$. Here, $\Gamma$ stands for the gamma function. 

Note that  in the case when $\alpha$ is a half-integer, a well known Lord Rayleigh's formula reads
\begin{equation}
\label{bessel2}
J_{n+1/2} (y) = (-1)^n \sqrt{\frac{2}{\pi}} y^{n+1/2} \Biggl(\frac{1}{y} \frac{d}{dy}  \Biggr)^n \Biggl(\frac{\sin (y)}{y}   \Biggr),
\end{equation}
for all $n \in \mathbb N$ and $y \in \mathbb R_+.$ Note that it can be easily proven by mathematical induction (see~\cite{watson}, Example 17.2.12)  that for all $y\in \mathbb R_+$ and all odd numbers $d$ it holds that
\begin{equation}
\label{bessel2.5}
y^{d/2}J_{d/2-1} (y) = c(d)\left(\cos(y) P_d(y) + \sin(y) Q_d(y)\right),
\end{equation}
where  $P_d$ and $Q_d$ are algebraic polynomials satisfying 
$$P_d(y)=p_1y+p_3y^3+ \ldots +p_{2d_1+1}y^{2d_1+1}$$
 and 
$$Q_d(y)=q_0+q_2y^2+ \ldots +q_{2d_2}y^{2d_2},$$
where $d_1<d/2$ and $d_2<d/2$ are natural numbers depending on $d$ only. Their exact values will not matter in what follows. What will play an important role for us is that the polynomial $P_d$ has only nonzero coefficients corresponding to the even powers of $y$ and that the polynomial $Q_d$ has only nonzero coefficients corresponding to the odd powers of $y$. 

Next, notice the following rate decay at infinity
\begin{equation}
\label{mokosh}
|J_\alpha(y)|\lesssim \frac{1}{y^{1/2}},
\end{equation}
valid for all $y\in \mathbb R_+$. For a proof of~\eqref{mokosh}, see~\cite{watson}. We remark right away that the bound~\eqref{mokosh} will serve us later to interchange integrals and to differentiate under integral signs.

Moreover, we shall need the following formula due to N. Ya. Sonine (see~\cite{sonin}, Section 37, page 38):
\begin{equation}
\label{bessel3}
J_{\nu-\mu-1} (y) = c(\nu,\mu) y^{\mu+1} \int_{1}^\infty \frac{J_\nu(sy)}{s^{\nu-1}}(s^2-1)^\mu ds,
\end{equation}
valid in particular for all $-1<\mu<\nu<2\mu+1/2$ and positive $y$. The right hand side of formula~\eqref{bessel3} is sometimes referred to as the second Sonine integral. If parameters $\nu$ and $\mu$ satisfy $\nu-\mu>3/2$, then the integral in~\eqref{bessel3} converges absolutely. Otherwise, we shall consider this integral as an improper one. In particular, for $\nu=d/2-1/2$ with a natural $d>2$ and $\mu=-1/2$ the formula~\eqref{bessel3} reads as follows
\begin{equation}
	\label{bessel4}
	J_{d/2-1} (y) = c(d) y^{1/2} \int_{1}^\infty \frac{J_{d/2-1/2}(sy)}{s^{d/2-3/2}}(s^2-1)^{-1/2} ds.
\end{equation}
The advantage of formula~\eqref{bessel4} is that the parameter $s$ in the integral there is bounded away from zero. This feature of formula~\eqref{bessel4} will be very important for us in what follows.

Finally, we shall use the following fact. For a radial function $f$ on $\mathbb R^d$, i.e. $f(\cdot)=\mathrm{f}(|\cdot|)$ with some  function $\mathrm{f}$ defined on the positive half-line that belongs to an appropriate integrability class, the definition of the Fourier transform given  above reduces to the following formula
\begin{equation}
\label{radialll}
\widehat{f} (\xi)
	= 
\int_{\mathbb R^d} e^{-2\pi i \xi. x} \mathrm{f} (|x|)\, dx 
		= \big| \xi \big|^{-d/2 +1} \int_0^\infty r^{d/2} \mathrm{f} (r) J_{d/2-1} (| \xi| r) dr.
\end{equation}
For a proof of the formula~\eqref{radialll}, see~\cite{graf}. See also~\cite{graftes} for important connected  results.

\section {Radial majorant theorem in several dimensions }
In this section, we shall first prove Proposition~\ref{havmashrad}, i.e. the radial majorant theorem in several dimensions that we need.
In more detail, we shall deduce it from the one dimensional Theorem~\ref{hanshlagonedim}. We hope that after that we shall have finished the proof of Proposition~\ref{havmashrad}, it will be clear how to modify the proof in order to deduce our radial multidimensional First BM  Theorem~\ref{bermalrad} from its one dimensional variant. We shall comment on the required modifications at the end of this section.

\begin{proof} (of Proposition~\ref{havmashrad}.)
In the proof that follows we shall treat separately the cases when the dimension of the ambient space is two, three, when it is odd and when it is even. 

Recall that we have $w(\cdot) = \phi (|\cdot|)$ for a function $\phi : \mathbb R_+ \rightarrow (0,1].$ At the same time, we have $\log(1/\phi) \in \mathrm{Lip}(\mathbb R_+)\cap L^1 (\mathbb R_+, dP)$.


Consider an auxiliary operator $T$, defined for an integrable function $\varkappa$ by the following formula
$$T \varkappa (y) := \int_0^\infty \varkappa (r) \cos(ry)dr, \;\; \text{for} \;\; y \in \mathbb R_+$$
and called the cosine transform. This definition easily generalizes to $L^2(\mathbb R_+)$ functions. Let us state a technical result concerning the  operator $T$ that we shall shortly need.
\begin{lemma}
\label{lem1}
Let $0<\sigma<1/10$ and let $\varphi:\mathbb R_+\rightarrow (0,1]$ be a function satisfying  
		$\log(1/\varphi)\in L^1(\mathbb R_+,dP)$ and
		$\|(\mathcal H_+\log(1/\varphi))^\prime\|_\infty\leqslant \pi \sigma$.
  Then, there exists a function $g : \mathbb R_+ \rightarrow \mathbb C$   that satisfies 
$C(\sigma,\varphi)\leqslant \|g\|_{L^2(0,1)}$, $g \in L^2 (\mathbb R_+)$  and $\mathrm{supp} (Tg) \subset [0, \sigma]$
and such that $|g| \leqslant \varphi$ on $\mathbb R_+$. Here, $C(\sigma,\varphi)$ is a positive constant that may depend on $\sigma$ and $\varphi$.
\end{lemma}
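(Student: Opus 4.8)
The plan is to reduce Lemma~\ref{lem1} to the one-dimensional Han--Schlag Theorem~\ref{hanshlagonedim} by passing to an \emph{even} weight on the whole line, and then to extract the quantitative lower bound on $\|g\|_{L^2(0,1)}$ from non-vanishing at the origin together with Bernstein's inequality.

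First I would symmetrize the weight. Set $\omega(x):=\varphi(|x|)$ for $x\in\mathbb R$; this is an even function $\omega:\mathbb R\to(0,1]$ (and we may assume $\omega<1$ after multiplying by a constant slightly below $1$, which shifts $\log(1/\omega)$ by a constant and hence alters neither the Lipschitz nor the Hilbert-transform hypothesis, affecting only the final constant). Since $\omega$ is even, $\int_{\mathbb R}\log(1/\omega)\,dP=2\int_0^\infty\log(1/\varphi)\,dP<\infty$, so $\log(1/\omega)\in L^1(dP)$. Moreover, identity~\eqref{frenchfries} gives $\mathcal H\log(1/\omega)=\mathcal H_+\log(1/\varphi)$ on $\mathbb R_+$, and since the Hilbert transform of an even function is odd, the derivative of $\mathcal H\log(1/\omega)$ is even; hence $\|(\mathcal H\log(1/\omega))^\prime\|_\infty=\|(\mathcal H_+\log(1/\varphi))^\prime\|_\infty\le\pi\sigma$. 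Applying Theorem~\ref{hanshlagonedim} to $\omega$ (with the parameter chosen to absorb the numerical factor relating the hypothesis constant to the normalization of the cosine transform $T$ against the Fourier transform) produces $\psi\in L^2(\mathbb R)$ with $\mathrm{supp}(\psi)$ contained in a small interval $[0,\hat\tau]$, with $|\widehat\psi|\le\omega$ on $\mathbb R$, and with the crucial lower bound $|\widehat\psi|\ge C(\hat\tau)\,\omega$ on $[-3/4,3/4]$.

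Next I would make the construction even and restrict it to the half-line. Put $G:=\tfrac12\bigl(\widehat\psi(\cdot)+\widehat\psi(-\cdot)\bigr)$. Then $G$ is even, $G\in L^2(\mathbb R)$, and $|G|\le\tfrac12(\omega(\cdot)+\omega(-\cdot))=\omega$; its Fourier transform is supported in $[-\hat\tau,\hat\tau]$, because $\mathcal F\widehat\psi=\psi(-\cdot)$ is supported in $[-\hat\tau,0]$ and $\mathcal F\bigl(\widehat\psi(-\cdot)\bigr)=\psi$ in $[0,\hat\tau]$. Setting $g:=G\restr_{\mathbb R_+}$, we obtain $g\in L^2(\mathbb R_+)$ and $|g|\le\varphi$. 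Because $G$ is even, its Fourier transform is, up to the normalizing constant, the cosine transform $Tg$, so the compact spectrum of $G$ translates into $\mathrm{supp}(Tg)\subset[0,\sigma]$; this is exactly the point at which $\hat\tau$ and a rescaling of the variable are fixed so that the one-sided spectral width becomes $\sigma$.

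Finally, and this is the heart of the matter, I would produce the explicit lower bound. The value at the origin survives symmetrization: $g(0)=G(0)=\widehat\psi(0)$, and the Han--Schlag lower bound at the point $0\in[-3/4,3/4]$ yields $|g(0)|=|\widehat\psi(0)|\ge C(\hat\tau)\,\omega(0)=C(\hat\tau)\,\varphi(0)>0$. Since $G$ is bandlimited with spectrum in $[-\hat\tau,\hat\tau]$ and $\|G\|_\infty\le1$, the classical Bernstein inequality bounds its derivative, $\|G^\prime\|_\infty\le L$ with $L$ an explicit multiple of $\hat\tau$; thus $G$ is Lipschitz and $|g(x)|\ge|g(0)|/2$ for all $0\le x\le\rho:=\min\bigl(1,\,|g(0)|/(2L)\bigr)$. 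Integrating gives $\|g\|_{L^2(0,1)}^2\ge \rho\,|g(0)|^2/4$, an explicit positive constant $C(\sigma,\varphi)^2$ depending only on $\sigma$ (through $\hat\tau$ and $L$) and on $\varphi$ (through $\varphi(0)$). This realizes quantitatively the principle from the outline that a Lipschitz function which is nonzero at the origin cannot be identically small. The reduction in the first two paragraphs is routine; the two points requiring genuine care are, conceptually, this last lower bound — where the combination ``nonvanishing at $0$'' plus ``Bernstein--Lipschitz control'' is what prevents the symmetrized bandlimited function from degenerating — and, technically, the bookkeeping of the normalization constants (the factor between the hypothesis $\pi\sigma$ and the constant $\pi\hat\tau/2$ of Theorem~\ref{hanshlagonedim}, and the power of $2\pi$ relating $T$ to the Fourier transform of the even extension), which must be matched by the choice of $\hat\tau$ and a dilation of the variable.
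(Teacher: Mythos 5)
Your proposal is correct and follows essentially the same route as the paper's proof: even extension of the weight, application of Theorem~\ref{hanshlagonedim}, symmetrization of the resulting bandlimited function, restriction to $\mathbb R_+$ (so that evenness turns the Fourier support condition into the cosine-transform support condition), and a lower bound on $\|g\|_{L^2(0,1)}$ from nonvanishing at the origin combined with a Lipschitz bound. The only cosmetic difference is that you invoke the classical sup-norm Bernstein inequality for the derivative bound, whereas the paper bounds $\|f_{\mathrm{sym}}^\prime\|_{\infty}\lesssim \sigma^{3/2}\|f\|_2$ via Cauchy--Schwarz; both, together with your explicit bookkeeping of the $2\pi$ and factor-of-two normalizations (which the paper glosses over), yield the stated constant $C(\sigma,\varphi)$.
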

\begin{proof} (of Lemma~\ref{lem1}.)
So, we shall now concentrate on the proof of Lemma~\ref{lem1}. Consider the even extension of the function $\varphi$. By this we mean the even function $\varphi_{\mathrm{ev}} : \mathbb R \rightarrow (0,1]$  that coincides with $\varphi$ on $\mathbb R_+$. 
 At the same time, we have 
$\log(1/\varphi_{\mathrm{ev}}) \in  L^1(dP)$   and $\|(\mathcal H\log(1/\varphi_{\mathrm{ev}}))^\prime\|_\infty\leqslant \pi \sigma$, see~\eqref{frenchfries}.
 This means that we can apply Theorem~\ref{hanshlagonedim}  to the function $\varphi_{\mathrm{ev}}.$ Hence, there exists a function	$f : \mathbb R \rightarrow \mathbb C$ that is not zero identically and such that 
$|f| \leqslant \varphi_{\mathrm{ev}}$, $f \in L^2 (\mathbb R)$  and 
$\mathrm{spec}(f) \subset [0, \sigma]$.
 Moreover, we infer that the inequality $C(\sigma) \varphi(0) \leqslant |f(0)|$ is valid for some positive constant $C(\sigma)$ depending on $\sigma$ only.


Consider the following function $f_{\mathrm{sym}}(\cdot) := f(\cdot) + f(-(\cdot)).$ Then, we obviously have that $\mathrm{spec}(f_{\mathrm{sym}}) \subset [- \sigma, \sigma]$ and that for all $y \in \mathbb R$ it holds that $f_{\mathrm{sym}}(-y) = f_{\mathrm{sym}}(y)$. Furthermore, we have
$$\big|f_{\mathrm{sym}}(y)\big| \leqslant \big| f(y) \big|+ \big| f(-y) \big| \leqslant \varphi_{\mathrm{ev}} (y) +\varphi_{\mathrm{ev}} (-y) = 2 \varphi_{\mathrm{ev}} (y).$$
On top of that, we see that $f_{\mathrm{sym}}\in L^2(\mathbb R)$ and that $f_{\mathrm{sym}}$ is nonzero identically, thanks to the facts that  $f_{\mathrm{sym}}(0) \neq 0$ and that the function $f_{\mathrm{sym}}$ is bandlimited, hence Lipschitz continuous, see e.g.~\cite{nazhav}, Section 1.4. In more detail, $f_{\mathrm{sym}}$ obeys the estimate $\|f_{\mathrm{sym}}^\prime\|_{\infty}\lesssim \sigma^{3/2} \|f\|_2$.

We are now ready to define the function $g$ as follows: $g := (f_{\mathrm{sym}}/2)\hel_{\mathbb R+}.$ Note that for all $t > 0$ it holds that
\begin{equation} 
	\label{eq2}
	\begin{split}
		\widehat{f_{\mathrm{sym}}}(t) &= \int_{-\infty}^0 e^{-2\pi i ty} f_{\mathrm{sym}}(y)\,dy + \int_0^\infty e^{-2\pi it y} f_{\mathrm{sym}}(y)\,dy \\
		&=\int_0^{ \infty} \Bigl( e^{-2\pi it y} + e^{2\pi it y} \Bigr) f_{\mathrm{sym}}(y)\,dy \\
		&= 4\int_0^{ \infty} g (y) \cos (2\pi ty)\, dy = 4T g (2\pi t).
	\end{split}
\end{equation}
This signifies that if $\tau \in (2\pi\sigma, \infty),$ then $Tg(\tau) = 0.$ 

Next, $g$ is obviously in $L^2(\mathbb R_+)$ and is Lipschitz continuous. Since also $|g(0)|=|f(0)|$, we conclude that the bound $\phi(0)\lesssim |g(0)|$ is verified by $g$. Moreover, for all nonnegative $t$ we have
$$\big| g (t) \big| = \frac{{\big| f_{\mathrm{sym}}}(t) \big|}{2} \leqslant \varphi_{\mathrm{ev}}(t) = \varphi (t),$$
which finishes off the proof of Lemma~\ref{lem1}.

\end{proof}

Let us now show  that Proposition~\ref{havmashrad} follows from Lemma~\ref{lem1}. Fix $0<\sigma<1/10$ and apply Lemma~\ref{lem1} to the function $\phi.$ This yields a certain function $g$. Consider the radial extension of $g$ defined by $\psi (x) := g (|x|)$ for $x \in \mathbb R^d$.

 We claim that the function $\psi$ satisfies  the required properties. Indeed, to see that $\psi\in L^2(\mathbb R^d)$, 
 we estimate as follows
$$\int_{\mathbb R^d}|\psi(x)|^2 dx\leqslant \int_{\mathbb R^d} \phi^2(|x|) dx \lesssim\int_0^\infty \phi^2(r)r^{d-1}dr< \infty,$$
thanks to~\ref{one}. The inequality $C(\sigma,\phi)\leqslant \|\psi\|_{L^2(B(0,1))}$  follows easily from the bounds on $|g(0)|$ and $\|g^\prime\|_{\infty}$  proved in Lemma~\ref{lem1}.

\bigskip
In the rest of the proof of Proposition~\ref{havmashrad}, we shall show that the function  $\psi$ is bandlimited. In order to check that property of $\psi$, we shall compute its Fourier transform.

We would like to start with the three dimensional case.

\textbf{Case $1$. Dimension three.} Let $\xi \in \mathbb R^3$ be an arbitrary point. By virtue of~\eqref{bessel2} and~\eqref{radialll}, 
\begin{equation} 
\label{eq1}
\begin{split}
\widehat \psi (\xi)
 &=  
\int_{\mathbb R^3} e^{-2\pi i \xi. x} g (|x|) dx 
= \big| \xi \big|^{-1/2} \int_0^\infty r^{1/2} g (r) J_{1/2} (| \xi| r) r dr \\ 
&= \big| \xi \big|^{-1} \sqrt{\frac{2}{\pi}} \int_0^\infty g (r) \sin (| \xi| r) r dr.
\end{split}
\end{equation}

Now, if $\tau \in (\sigma, \infty),$ then $Tg(\tau) = 0.$ This means that for such $\tau$ we have the formula $(T g)^\prime (\tau) = 0,$ which in turn signifies that 
\begin{equation*}
\int_0^\infty g (r) \sin (\tau r) r dr =0.
\end{equation*}
The reason why we are allowed to differentiate under the integral sign in $Tg$ is that the inequality $|g|\leqslant \phi$ and the decay rate~\ref{one} imposed on the function $\phi$ yield $g\in L^1(\mathbb R_+,r dr)$.

Bearing in mind~\eqref{eq1}, we infer the property $\widehat \psi (\xi) = 0$ once $| \xi | >\sigma$. This allows us to conclude in the three dimensional case. 

\bigskip

\textbf{Case $2$. Odd dimensions.} Next, in the case of an arbitrary odd dimension $d$ we write out the expression for the Fourier transform of $\psi$ using formulas~\eqref{bessel2.5} and~\eqref{radialll}
\begin{equation*} 
	\begin{split}
		\widehat \psi (\xi)
		&= \big| \xi \big|^{-d/2 +1} \int_0^\infty r^{d/2} g (r) J_{d/2-1} (| \xi| r) dr\\
		&=c(d)\big| \xi \big|^{-d +1} \int_0^\infty g (r) \Bigl(\cos(|\xi|r)P_d(|\xi|r)+\sin(|\xi|r)Q_d(|\xi|r)\Bigr) dr.
		\end{split}
\end{equation*}

Taking sufficiently many derivatives of the function $Tg$, we can see that if $\tau \in (\sigma, \infty)$, then for all integers $m$ from $0$ to $(d-1)/2$ it holds that
\begin{equation}
\label{moments}
\int_0^\infty g (r) \cos(\tau r) r^{2m} dr =0 \; \text { and } \; \int_0^\infty g (r) \sin (\tau r) r^{2m+1} dr =0.
\end{equation}
Recalling that the polynomial $P_d$ has only nonzero coefficients corresponding to the even powers and that the polynomial $Q_d$ has only nonzero coefficients corresponding to the odd powers, using formulas~\eqref{moments} we infer that $\widehat \psi (\xi)=0$ for all $\xi$ satisfying $|\xi|>\sigma$.  We hence also conclude in the case of an arbitrary odd dimension.

 \bigskip 

Further, due to a technical reason, we prefer to treat the two dimensional case separately.

\textbf{Case $3$. Dimension two.}  In this case  we shall write out the Fourier transform of the function $\psi$ using the Sonine representation formula~\eqref{bessel4} for Bessel functions of the first kind
\begin{equation} 
	\label{eqlalalalalend}
	\begin{split}
		\widehat \psi (\xi)
		&= \int_0^\infty r g (r) J_{0} (| \xi| r) dr \\
		&=c\int_0^\infty r g (r) \lim_{R\rightarrow \infty}\int_1^R J_{1/2} (s |\xi| r) \frac{(|\xi|r)^{1/2}s^{1/2}}{(s^2-1)^{1/2}} ds dr\\
		&=c \lim_{R\rightarrow \infty}\int_1^R\int_0^\infty g (r)\sin(s|\xi|r) r dr \frac{ds}{(s^2-1)^{1/2}},
	\end{split}
\end{equation}
where $\xi\in \mathbb R^2$ is arbitrary. Here, we can exchange the limit with respect to $R$ and the integration in $r$, thanks to the dominant convergence theorem. The integrals in $s$ and in $r$ can be exchanged by Fubini's theorem. 

Now, as in the three dimensional case, we can   
  see that in~\eqref{eqlalalalalend}, in the last line, the inner integral with respect to $r$ is zero once $s>1$ and $|\xi|>\sigma$.  
Hence, we conclude in the case of dimension two.
 
 \bigskip

\textbf{Case $4$. Even dimensions bigger than two.} In the case of an arbitrary even dimension $d\geqslant 4$ we also write out a formula for the Fourier transform of $\psi$, but now using the Sonine representation formula~\eqref{bessel4} for Bessel functions of the first kind
\begin{equation} 
	\label{eqlalalalend}
	\begin{split}
		\widehat \psi (\xi)  
		&= \big| \xi \big|^{-d/2 +1} \int_0^\infty r^{d/2} g (r) J_{d/2-1} (| \xi| r) dr \\
		&=c(d)\big| \xi \big|^{-d/2 +1} \int_0^\infty r^{d/2} g (r) \int_1^\infty J_{d/2-1/2} (s |\xi| r) \frac{(|\xi|r)^{1/2}(s^2-1)^{-1/2}}{s^{d/2-3/2}} ds dr\\
		&=c(d)\big| \xi \big|^{-d/2 +3/2} \int_1^\infty \frac{(s^2-1)^{-1/2}}{s^{d/2-3/2}} \int_0^\infty r^{d/2+1/2} g (r)  J_{d/2-1/2} (s |\xi| r)  dr ds,
	\end{split}
\end{equation}
where $c(d)$ is an absolute constant depending on the dimension $d$ only. The integrals above are interchangeable, thanks to the Fubini theorem. In effect, we can apply the Fubini theorem, thanks to bound~\eqref{mokosh} and condition~\ref{one}. Notice that here we are using that $d\geqslant 4$, in order to reassure that the double integral in question converges.

We have already seen in the proof of the odd-dimensional case that $Tg(\tau)=0$ for all $\tau>\sigma$ implies
\begin{equation} 
	\label{vazhno}
\int_0^\infty r^{d/2+1/2} g(r) J_{d/2-1/2}(\tau r) dr =0 
\end{equation}
for all $\tau>\sigma$. Indeed, to see that formula~\eqref{vazhno} holds, it suffices to replace $d$ by $d+1$ (which is now an odd integer) in the computations carried out in the proof of Case $2$.

Now, we use  line~\eqref{vazhno} 
 to see that in~\eqref{eqlalalalend}, in the last line, the inner integral with respect to $r$ is zero once $s>1$ and $|\xi|>\sigma$.  
Hence, we conclude in the case of an arbitrary even dimension.

Thus, Proposition~\ref{havmashrad} is proved in full generality.

\end{proof}

Let us now clarify how to modify the proof of Proposition~\ref{havmashrad}, in order to obtain a proof of Theorem~\ref{bermalrad}. 

\begin{proof} (of Theorem~\ref{bermalrad}.)
The only two points in the proof that need to be explained are: 
\begin{itemize}
\item how to obtain in the context of Theorem~\ref{bermalrad} some (not necessarily explicit) lower bound on the bandlimited function, that will be constructed in a counterpart of Lemma~\ref{lem1} just below and 
\item how to deal with the condition~\ref{one}. 
\end{itemize}

The problem mentioned in the latter point can be easily overcome via considering the function $\min(\omega,(1+|\cdot|)^{-Q})$ (with $Q$ sufficiently large) instead of $\omega$. Obviously, this modified function verifies~\ref{one},~\ref{two} and~\ref{three}. It is also clearly majorized by $\omega$. 

In order to explain how to overcome the obstacle from the former point, we shall formulate the following lemma which is a counterpart of Lemma~\ref{lem1}.
\begin{lemma}
\label{lem2}
Let $\varphi:\mathbb R_+\rightarrow (0,1]$ be a function satisfying $\log(1/\varphi)\in L^1(\mathbb R_+,dP)$ and such that  $\log(1/\varphi)$ is Lipschitz continuous.  Then, 
for each positive number $\sigma$ there exists a nonzero function $g : \mathbb R_+ \rightarrow \mathbb C$   that satisfies 
 $g \in L^2 (\mathbb R_+)$  and $\mathrm{supp} (Tg) \subset [0, \sigma]$
and such that $|g| \leqslant \varphi$ on $\mathbb R_+$.
\end{lemma}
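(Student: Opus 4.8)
\textbf{Plan for the proof of Lemma~\ref{lem2}.}

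The plan is to mirror the proof of Lemma~\ref{lem1}, but to replace the quantitative input (Theorem~\ref{hanshlagonedim} of Han and Schlag) by the qualitative First Beurling and Malliavin Theorem~\ref{thmBM}, and then to repair the only place where the construction could degenerate, namely the possibility that the symmetrized function vanishes at the origin. First I would form the even extension $\varphi_{\mathrm{ev}}:\mathbb R\rightarrow(0,1]$ of $\varphi$. The hypotheses $\log(1/\varphi)\in L^1(\mathbb R_+,dP)$ and $\log(1/\varphi)$ Lipschitz transfer to $\varphi_{\mathrm{ev}}$: indeed, $\log(1/\varphi_{\mathrm{ev}})\in L^1(dP)$ because the Poisson measure is symmetric, and evenness preserves the Lipschitz bound (the only subtlety is at the origin, where the one-sided Lipschitz property of an even function automatically yields a two-sided one). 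Thus Theorem~\ref{thmBM} applies to $\varphi_{\mathrm{ev}}$ and produces a nonzero $f\in L^2(\mathbb R)$ with $\mathrm{spec}(f)\subset[0,\sigma]$ and $|f|\leqslant\varphi_{\mathrm{ev}}$.

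Next I would symmetrize by setting $f_{\mathrm{sym}}(\cdot):=f(\cdot)+f(-(\cdot))$, exactly as in Lemma~\ref{lem1}. This gives an even function with $\mathrm{spec}(f_{\mathrm{sym}})\subset[-\sigma,\sigma]$ and $|f_{\mathrm{sym}}|\leqslant 2\varphi_{\mathrm{ev}}$, and the computation~\eqref{eq2} shows that $g:=(f_{\mathrm{sym}}/2)\restr_{\mathbb R_+}$ has $\mathrm{supp}(Tg)\subset[0,\sigma]$ together with $|g|\leqslant\varphi$ on $\mathbb R_+$. The one genuine obstacle, and the reason a separate lemma is needed, is that Theorem~\ref{thmBM} (unlike Theorem~\ref{hanshlagonedim}) gives no lower bound on $|f(0)|$, so $f_{\mathrm{sym}}$ might vanish at $0$ and indeed $f_{\mathrm{sym}}$ could a priori be the zero function. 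I would handle this by the device already advertised in the discussion preceding the lemma: divide out the zero at the origin. Concretely, if $f_{\mathrm{sym}}\not\equiv 0$, let $2N$ be the order of its (even-order, by evenness) zero at $0$ and replace $g$ by $\tilde g(r):=g(r)/r^{2N}$ near the origin, i.e. consider the bandlimited function $f_{\mathrm{sym}}(y)/y^{2N}$, whose spectrum is still contained in $[-\sigma,\sigma]$ since dividing a band-limited function by $y^{2N}$ corresponds to an integration of its Fourier transform and does not enlarge the support. The resulting function is nonzero at the origin, still even, still dominated by $2\varphi_{\mathrm{ev}}$ on any compact set, and one checks it remains in $L^2$ and bounded by $\varphi$ after a harmless normalization; its restriction to $\mathbb R_+$ is the desired $g$.

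The hardest step to execute carefully is this last division argument: I must verify that passing from $f_{\mathrm{sym}}$ to $f_{\mathrm{sym}}(\cdot)/(\cdot)^{2N}$ keeps the three required properties simultaneously—membership in $L^2(\mathbb R)$, the majorization $|g|\leqslant\varphi$, and the spectral containment. Spectral containment follows because $\widehat{f_{\mathrm{sym}}}$ vanishes to the appropriate order at frequencies outside $[-\sigma,\sigma]$ and division by $y^{2N}$ acts as an $N$-fold antiderivative on the Fourier side, which cannot expand the support; the majorization is preserved because near zero the $2N$-fold zero of $f_{\mathrm{sym}}$ controls the growth of $1/y^{2N}$, while away from zero the factor $y^{-2N}$ is bounded, so after multiplying by a suitable small constant the bound $|g|\leqslant\varphi$ holds globally (using that $\varphi$ is bounded below by the Lipschitz, strictly positive function $\varphi$ on every compact set). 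Finally, since we impose no quantitative lower bound here, the nonvanishing of $g$ at the origin is all that is required to guarantee $g\not\equiv 0$, completing the proof.
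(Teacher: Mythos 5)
Your overall strategy (even extension, Theorem~\ref{thmBM}, symmetrization as in Lemma~\ref{lem1}, then repairing a possible zero at the origin by dividing out a power) is the same as the paper's, but the \emph{order} in which you perform the last two steps creates a genuine gap. You symmetrize first and only then divide, and your division argument is explicitly conditional on $f_{\mathrm{sym}}\not\equiv 0$ --- a case distinction you yourself raise (``$f_{\mathrm{sym}}$ could a priori be the zero function'') but never resolve. As written the proof is incomplete: if $f_{\mathrm{sym}}\equiv 0$ your construction produces nothing, while if $f_{\mathrm{sym}}\not\equiv 0$ then, since $f_{\mathrm{sym}}$ is even, its restriction $g$ to $\mathbb R_+$ is already a nonzero admissible function, so the division by $r^{2N}$ buys you nothing for this lemma, whose conclusion demands only $g\not\equiv 0$, not $g(0)\neq 0$. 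The paper avoids this dilemma by repairing the zero \emph{before} symmetrizing: Theorem~\ref{thmBM} guarantees $f\not\equiv 0$, and since $f$ is bandlimited, hence real analytic, its zero at the origin has some finite order $N$; the paper shows $\mathrm{spec}(f/x^N)\subset[0,\sigma]$ (via the antiderivative $\Phi$ of the Fourier transform, using $f(0)=0$ to get $\Phi(0)=\Phi(\sigma)=0$), then renormalizes in two cases (according to whether $M=\max_{|x|\leqslant\varrho}f_0$ exceeds $\varrho^{-N}$, after arranging $\varphi_{\mathrm{ev}}=1$ near the origin) so that the majorization survives, and only then symmetrizes; the symmetrized function is then nonzero at the origin, hence nonzero.

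The gap is easy to close, because the troublesome case is in fact vacuous: $\widehat{f_{\mathrm{sym}}}(\xi)=\widehat f(\xi)+\widehat f(-\xi)$ coincides a.e.\ with $\widehat f(\xi)$ on $(0,\sigma]$, since $\mathrm{supp}(\widehat f)\subset[0,\sigma]$ kills the reflected term there; thus $f_{\mathrm{sym}}\equiv 0$ would force $\widehat f=0$ a.e., contradicting $f\not\equiv 0$. If you add this observation your proof goes through, and is then actually shorter than the paper's, since once $f_{\mathrm{sym}}\not\equiv 0$ is secured no division is needed at all for the present statement. Separately, be careful with the assertion that division by $y^{2N}$ ``acts as an antiderivative on the Fourier side, which cannot expand the support'': an antiderivative of a compactly supported function is compactly supported only when the relevant moments vanish (here this is guaranteed by the order-$2N$ zero of $f_{\mathrm{sym}}$ at $0$); the paper's computation with $\Phi(0)=\Phi(\sigma)=0$ is the precise version of what you are gesturing at, and some such justification must appear in a complete write-up.
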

\begin{proof}
(of Lemma~\ref{lem2}.) Recall that the even extension $\varphi_{\mathrm{ev}}$ of the function $\varphi$ has the convergent logarithmic integral
 and satisfies the bound $\|(\log(1/\varphi_{\mathrm{ev}}))^\prime\|_\infty<\infty.$ This means that we can apply Theorem~\ref{thmBM}  to the function  $\varphi_{\mathrm{ev}}.$ Hence, there exists a function $f : \mathbb R \rightarrow \mathbb C$ that is not zero identically and such that 
$|f| \leqslant \varphi_{\mathrm{ev}}$, $f \in L^2 (\mathbb R)$  and 
$\mathrm{spec}(f) \subset [0, \sigma]$.

If $f(0)\neq 0$, then there is nothing to prove, since we can proceed exactly as in the proof of Lemma~\ref{lem1}. If $f(0)= 0$, then we shall modify the function $f$. Since $f$ is bandlimited, it is also real analytic, so we know that there exists a natural number $N$ such that the function $f_0(x):=f(x)/x^N$ defined for real $x$ satisfies $f_0(0)\neq 0$ and $f_0\in C^\infty(\mathbb R)$. 

We claim that $\mathrm{spec}(f_0)\subset[0,\sigma]$. We shall only prove this in the case when $N=1$, since the general case follows easily by induction. By the Fourier inversion theorem, we have for all real $x$ 
$$f(x)=\int_0^\sigma F(t)e^{2\pi i t x}dt,$$
for some function $F\in L^2([0,\sigma])$. Since $f(0)=0$, we have $\int_0^\sigma F=0$. Let $\Phi$ denote the antiderivative of $F$ defined by
$$\Phi(t):=\int_0^t F(x)dx.$$ 
Note that clearly $\Phi\in L^2([0,\sigma])$ (in fact, it is easy to show that the function $\Phi$ is even H\"older continuous) and that  $\Phi(\sigma)=\Phi(0)=0$ and hence for all real $x$ it holds that
$$
\frac{f(x)}{x}=\int_0^\sigma \frac{e^{2\pi i t x}}{x}d\Phi(t)=-2\pi i \int_0^\sigma \Phi(t) e^{2\pi i t x} dt,
$$
and the claim follows.

\bigskip
We infer that there exists $\varrho>0$ such that
$M:=\max_{|x|\leqslant \varrho}f_0(x)$ satisfies $0<M<\infty$. Suppose, as we can, that $\varphi_{\mathrm{ev}}(x)=1$ for $-\varrho\leqslant x \leqslant \varrho$; this is explained, for instance, at the beginning of Section 2.3.1 of the paper~\cite{nazhav}. We treat separately two cases according to the value of $M$. In each of those cases we shall replace $f_0$ by a certain other function.

In the first case, $\varrho^{-N}\leqslant M$ and we choose the function $f_1:=f_0/M$ instead of $f_0$. We see that for $-\varrho\leqslant x \leqslant \varrho$ one has $|f_1(x)|\leqslant 1 = \varphi_{\mathrm{ev}}(x)$, and for $x$ such that $\varrho\leqslant|x|$ we have the inequalities
$$|f_1(x)|\leqslant \frac{|f(x)|}{M|x|^N}\leqslant |f(x)| \leqslant \varphi_{\mathrm{ev}}(x).$$

The second case corresponds to $M\leqslant \varrho^{-N}$. In this case, we take the function $f_2:=\varrho^N f_0$ instead of $f_0$. For $-\varrho\leqslant x \leqslant \varrho$ we see that 
$$|f_2(x)|\leqslant \frac{|f(x)|\varrho^N}{|x|^N}\leqslant M\varrho^N\leqslant\varphi_{\mathrm{ev}}(x),$$ 
and for $x$ such that $\varrho\leqslant|x|$ one has
$$|f_2(x)|\leqslant \frac{|f(x)|\varrho^N}{|x|^N}\leqslant |f(x)|\leqslant \varphi_{\mathrm{ev}}(x).$$

Also, in both cases $f_1(0)\neq 0$ and $f_2(0)\neq 0$. Therefore, we can finish off the proof of Lemma~\ref{lem2} exactly as in the proof Lemma~\ref{lem1}.

\end{proof}

The rest of the proof of Theorem~\ref{bermalrad} follows closely that of Proposition~\ref{havmashrad} and is not detailed here.

\end{proof}

\section{Appendix}
Let us now show how to derive Theorem~\ref{nonradial} from Theorem~\ref{bermalrad}.

\begin{proof} (of Theorem~\ref{nonradial}.)  We shall majorize the function $\Omega:=\log(1/\omega)$ by a certain radial Lipschitz function. To this end, denote  sets $E_0:=B(0,1)$ and for $j\in \mathbb N$, $E_j:=B(0,2^{j+1})\backslash B(0,2^{j})$ and numbers $\lambda_j:=\max_{y\in \bar{E_j}} \Omega(y)$. Define an auxiliary function $\Omega_1$ as follows: if $x\in E_j$, then  $\Omega_1(x):=\lambda_j$. This function clearly majorizes $\Omega$.

By the same reasons as those mentioned in the proof of Lemma~\ref{lem2}, there is no loss of generality to assume that $\Omega(0)=0$. Hence, we infer the property $\Omega(x)\leqslant \mathrm{C} 2^j$ which is true for $x\in E_j$, where $\mathrm{C}$ is the Lipschitz constant of the function $\Omega$.

Since the function $\Omega$ is Lipschitz, we know that for all $x\in \mathbb R^d$ it holds that $\lambda_j-\mathrm{C}|x-x_j|\leqslant \Omega(x)$ for the same constant $\mathrm{C}$ as above and for  certain points $x_j\in \bar{E_j}$. According to the conditions imposed on $\Omega$, we have
\begin{equation}
\label{salazar}
\begin{split}
\infty>\int_{\mathbb R^d} \Omega(x)\frac{dx}{(1+|x|)^\gamma}&\gtrsim \sum_{j=0}^\infty\int_{E_j}\Omega(x) \frac{dx}{2^{j\gamma}} \\
&\geqslant \sum_{j=0}^\infty 2^{-j\gamma} \int_{B\left(x_j,(2\mathrm{C})^{-1}\lambda_j\right)\cap E_j} (\lambda_j/2)dx\gtrsim\sum_{j=0}^\infty 2^{-j\gamma} \lambda_j^{d+1}.
\end{split}
\end{equation}
We would like to stress that we have used  in the last estimate of line~\eqref{salazar} the inequality $\Omega(x)\leqslant \mathrm{C} 2^j$ valid for $x\in E_j$.

On the other hand,
\begin{equation}
\label{veles}
\int_{\mathbb R^d} \Omega_1(x)  \frac{dx}{(1+|x|^2)^{\frac{d+1}{2}}} \lesssim \sum_{j=0}^\infty \int_{E_j}\lambda_j \frac{dx}{2^{j(d+1)}} \lesssim \sum_{j=0}^\infty \lambda_j 2^{-j}.
\end{equation}

Denote $\beta:=\gamma/(d+1)$ and note that $\beta<1$. Thanks to the H\"older inequality, we may write out line~\eqref{veles} as follows
\begin{equation}
\label{perun}
\int_{\mathbb R^d} \Omega_1(x) \frac{dx}{(1+|x|^2)^{\frac{d+1}{2}}} \lesssim \sum_{j=0}^\infty \frac{\lambda_j}{2^{j\beta}} 2^{j(\beta-1)} \lesssim \left(\sum_{j=0}^\infty\frac{\lambda_j^{d+1}}{2^{j\gamma}}\right)^{\frac{1}{d+1}}<\infty.
\end{equation}
It is maybe worth noting  that we have used bound~\eqref{salazar} in the last inequality of line~\eqref{perun}.

The function $\Omega_1$ is not necessarily Lipschitz. However, it is easy to modify $\Omega_1$ so that it becomes Lipschitz and that the property~\eqref{perun} is still verified. Theorem~\ref{nonradial} hence is proved, by virtue of Theorem~\ref{bermalrad}.

\end{proof}

\section{Data Availability Statement}
No datasets were generated or analysed during the current study.

\section{Conflict of interest}
On behalf of all authors, the corresponding author states that there is no conflict of interest.

\section{Competing interests}
The author has no relevant financial or non-financial interests to disclose.

\section{Funding}
This research was supported by the Russian Science Foundation (grant No.~23-11-00171), https://rscf.ru/project/23-11-00171/

\renewcommand{\refname}{References}

\end{document}